\newtheorem{remark}[theorem]{Remark}
\newtheorem{example}[theorem]{Example}
\newtheorem{assumption}[theorem]{Assumption}
\newcommand{\raumH}{H^1_{[0]}(-b,T;\mathbb{R}^n)}
\newcommand{\raumL}{L^2(-b,T;\mathbb{R}^n)}
\newcommand{\raumHeins}{H^1(-b,T;\mathbb{R}^n)}
\newcommand{\Lnull}{L^2_0(-b,T;\mathbb{R}^n)}
\newcommand{\fredi}{\textcolor{cyan}}
\title{Second order analysis for the optimal selection of time delays
}
\author{Karl Kunisch\thanks{University of Graz, Institute of Mathematics, Scientific Computing, Graz, Austria 
 \ and \ Radon Institute, Austrian Institute of Sciences, Linz, Austria (karl.kunisch@uni-graz.de).}
\and Fredi Tr\"oltzsch\thanks{Institut f\"ur Mathematik,
Technische Universit\"at Berlin, D-10623 Berlin, Germany
(troeltzsch@math.tu-berlin.de).}}
\begin{document}

\maketitle

\begin{abstract}
For a nonlinear ordinary differential equation with time delay, the differentiation of the solution with respect to
the delay is investigated. Special emphasis is laid on the second-order derivative. The results are applied to an associated optimization problem for the time delay. A first- and second-order sensitivity analysis is performed including an adjoint calculus that avoids the second derivative of the state with respect to the delay. 
\end{abstract}

\begin{AMS} 49K15, 49K40, 
49K20,34K35, 34K38.
\end{AMS}

\begin{keywords}
delay differential equation, differentiation with respect to delays, optimization, first and second-order optimality conditions.
\end{keywords}

\pagestyle{myheadings} \thispagestyle{plain} \markboth{K. KUNISCH
AND F. TR\"OLTZSCH}{Optimization of time delays}

\section{Introduction} \label{S1}
In this paper, we discuss the differentiability of the solution of the delay differential equation
\begin{equation}
\begin{array}{rcll}
\dot x(t) + f(x(t))&=& A \, x(t-\tau) + g(t) & \mbox{ in } (0,T),\\
x(t) &=& \varphi(t) & \mbox{ in }[-b,0]
\end{array} \label{delayeq}
\end{equation}
with respect to the time delay $\tau$. More precisely, denoting the solution of this equation by $x[\tau]$,
we show the existence  of the first- and second-order derivatives of the mapping $\tau \mapsto x[\tau]$ and derive equations for them. 

In \eqref{delayeq}, the following quantities are given:  A continuously differentiable 
function $f: \mathbb{R}^n \to \mathbb{R}^n$, a matrix  $A \in \mathbb{R}^{n\times n}$, a time delay $\tau \ge 0$,
a fixed terminal time $T > 0$, and functions $g: [0,T] \to \mathbb{R}^n, \, \varphi: [-b,0] \to \mathbb{R}^n$.
Here, $b > 0$ is a fixed bound such that $\tau$ can vary in the interval $[0,b]$. 

As an application of the differentiability properties of the mapping $\tau \mapsto x[\tau]$, we derive first- and second-order optimality conditions for the following delay optimization problem: 
\begin{equation}\label{optim}
\min_{0 \le \tau \le b} \int_0^T |x[\tau](t) - x_d(t)|^2 dt,
\end{equation}
where $x_d \in L^2(0,T;\mathbb{R}^n)$ is a given desired state.

Our paper contributes to the control theory of delay equations that is a well developed field of applied mathematics. 
Among the very many contribution we can only cite a very small selection from the distant \cite{bb79,halanay68,hale1971,kk82,kaku87} and more recent past \cite{breda23, Michiels2013}. 

In theoretical physics, stability properties and the control of systems of delay equation became an important issue. There is an active research in feedback control and stabilization of chaotic systems. We refer to the seminal paper \cite{pyragas1992}, to \cite{Erneux2009},  and to the survey
\cite{schoell_schuster2008} with various applications. We mention exemplarily the design of lasers or the research on neurological diseases. The dependence of solutions on the delays is an interesting and significant question. In particular, this concerns the differentiability with respect to delays. In \cite{hale_ladeira1991}, higher order differentiability was shown for a nonlinear differential equation with delay,
in  \cite{hale_ladeira1993} for a class of nonlinear retarded reaction diffusion equations. Both results
were proved only locally in time. 

Recently, in \cite{casas_mateos_troeltzsch2019} the optimization of time delays in semilinear parabolic partial differential equation was investigated in the context of optimal control theory. The results were
based on a general theory of first-order necessary optimality conditions  for optimal control problems with nonlocal measure control of parabolic equations, \cite{casas_mateos_troeltzsch2018}. An optimization problem of  feedback controllers for a parabolic equation with nonlocal time delay was discussed in \cite{Nestler_etal2016}. We also mention \cite{casas_yong2023}, where a nonlocal optimal control problem with memory and measure-valued controls is considered. All the results cited in this block are global in time.

The main novelty of our paper is the second-order 
sensitivity analysis for the optimization of the time delay in a nonlinear system of 
delay differential equations. In particular, we prove the first- and second-order differentiability of the state w.r. to the delay. Moreover, we present the sensitivity analysis for the optimization problem \eqref{optim} -- first by adjoint calculus without invoking the second derivative of the state w.r. to $\tau$ and later on using this second-order derivative. We improve the results of \cite{hale_ladeira1991}, \cite{hale_ladeira1993}, where a sufficiently small time horizon is  assumed for the differentiability results. We are able to derive results that are global in time. 

The paper is organized as follows: In Section \ref{S2}  well-posedness of equation \eqref{delayeq} is proven
and the regularity of its solution is discussed. Section \ref{S3} is devoted to the differentiability of the state $x$ with respect to the delay $\tau$. The  first and second-order  sensitivity analysis of the optimization problem is addressed in Section \ref{S4} via an adjoint calculus without using the second-order derivative of the state with respect  to $\tau$.
The (global) second-order differentiability of the state with respect  to $\tau$ is the topic of Section \ref{S5}.  
Section \ref{S6} contains a brief discussion of the case of multiple time delays.

\section{The delay differential equation} \label{S2}

The aim of this section consists in establishing existence and uniqueness of a solution $x$ to \eqref{delayeq}. Throughout the paper, we will require the following standing assumptions on $f$, $\varphi$, and $g$, where $Df(x)$ will denote the Jacobian matrix of $f$ at $x$ and $I$ is the identity matrix. 

\begin{assumption} \label{A1}The function $f$ is continuously differentiable and there is a constant $\lambda > 0$, such that  
\begin{equation} \label{lowerbound}
Df(x) + \lambda I \text{ is positive semi-definite for all }  x \in  \mathbb{R}^n.
\end{equation}
The function $g$ belongs to $L^2(0,T; \mathbb{R}^n)$ and $\varphi$ to $\raumHeins$.
\end{assumption}

Later, in the context of differentiability, we will slightly strengthen the assumptions on $f, \, \varphi$, and $g$.
For $n=1$, a typical non-monotone candidate is $f(x) =  (x-x_1)(x-x_2)(x-x_3)$ with given real numbers $x_1 \le x_2 \le x_3$.

Prior to the discussion of equation \eqref{delayeq}, we first consider the auxiliary system
\begin{equation}
\begin{array}{l}
\dot x(t) + f(x(t))= g(t) \quad \mbox{ in } (0,T),\\
x(0) = x_0,
\end{array} \label{aux1}
\end{equation}
where $x_0 \in  \mathbb{R}^n$ is given. 

A function $x \in H^1(0,T;\mathbb{R}^n)$ is said to be a solution of \eqref{aux1}, if it satisfies 
the equation almost everywhere in $(0,T)$ and obeys the initial condition. 
If $g$ is continuous, then we can consider $x$ as classical solution, i.e. $x \in C^1([0,T],\mathbb{R}^n).$

\begin{proposition}\label{L1}Assume that $f$ and $g$ obey Assumption \ref{A1}. 
Then, for all $x_0 \in \mathbb{R}^n$, equation \eqref{aux1}
has a unique solution $x \in H^1(0,T;\mathbb{R}^n)$. It satisfies 
\[
\|x\|_{H^1(0,T;\mathbb{R}^n)} \le C(|x_0|, |f(0)|, T, \|g\|_{L^{2}(0,T;\mathbb{R}^n)} ), 
\]
where $C$ is continuous and monotonically increasing in each of its arguments. 
\end{proposition}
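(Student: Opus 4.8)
The plan is to establish this via the standard ODE existence theory combined with the monotonicity structure provided by Assumption~\ref{A1}. The equation \eqref{aux1} is a (non-delay) ordinary differential equation $\dot x = g - f(x)$ with a continuously differentiable right-hand side, so Picard--Lindel\"of yields a unique local solution. The real content of the proposition is the \emph{global} existence on all of $(0,T)$ together with the quantitative a priori estimate, and both of these rest on the dissipativity condition \eqref{lowerbound}.

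First I would derive the a priori estimate, since it simultaneously furnishes the bound that prevents finite-time blow-up. Testing the equation with $x$ (i.e.\ taking the inner product of $\dot x + f(x) = g$ with $x$) gives
\begin{equation*}
\frac12 \frac{d}{dt}|x(t)|^2 + \langle f(x(t)), x(t)\rangle = \langle g(t), x(t)\rangle.
\end{equation*}
The monotonicity assumption must be converted into a coercivity-type lower bound on $\langle f(x),x\rangle$. Writing $f(x) = f(0) + \int_0^1 Df(sx)\,x\,ds$ and using that $Df + \lambda I$ is positive semi-definite, one obtains $\langle f(x) - f(0), x\rangle \ge -\lambda |x|^2$, hence $\langle f(x), x\rangle \ge -\lambda|x|^2 + \langle f(0), x\rangle$. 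Substituting this and applying Young's inequality to the terms $\langle g, x\rangle$ and $\langle f(0), x\rangle$, I get a differential inequality of the form $\frac{d}{dt}|x(t)|^2 \le c_1 |x(t)|^2 + c_2(|f(0)|^2 + |g(t)|^2)$. Gr\"onwall's inequality then bounds $\sup_t |x(t)|^2$, and integrating in time bounds $\|x\|_{L^2(0,T)}^2$; feeding this back into $\dot x = g - f(x)$ (and noting $|f(x)| \le |f(0)| + \text{Lip}\cdot|x|$ is not available globally unless $Df$ is bounded, so I would instead bound $\|f(x)\|_{L^2}$ via the $L^\infty$ bound on $x$ together with continuity of $f$) controls $\|\dot x\|_{L^2}$, completing the $H^1$ estimate. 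One must check that the resulting constant $C$ depends continuously and monotonically on $|x_0|$, $|f(0)|$, $T$, and $\|g\|_{L^2}$, which is transparent from the explicit Gr\"onwall bound.

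To upgrade the local solution to a global one, I would invoke the standard continuation principle: the maximal solution either exists on all of $[0,T]$ or its norm blows up as $t$ approaches the end of the maximal interval of existence. Since the a priori estimate above bounds $|x(t)|$ uniformly on any interval on which the solution exists, blow-up is excluded, and the solution extends to $[0,T]$. Uniqueness follows from local uniqueness in Picard--Lindel\"of together with the standard argument that two solutions agreeing at a point must coincide on their common interval.

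The main obstacle I anticipate is the lower bound on $\langle f(x), x\rangle$: Assumption~\ref{A1} is stated as a semi-definiteness condition on the Jacobian rather than directly as a monotonicity estimate, so the integral-representation step converting $Df(x) + \lambda I \succeq 0$ into the coercivity inequality $\langle f(x) - f(0), x\rangle \ge -\lambda|x|^2$ is the crux. A secondary subtlety is that $f$ need not be globally Lipschitz, so I would be careful to bound $\|f(x)\|_{L^2}$ through the uniform-in-time bound on $x$ and the continuity of $f$ on the resulting compact range, rather than assuming a global Lipschitz constant that the hypotheses do not provide.
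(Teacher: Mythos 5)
Your proposal is correct and shares the overall skeleton of the paper's proof: an a priori bound obtained by testing the equation with the solution, Gr\"onwall's inequality, and then global existence via the extension-or-blow-up principle combined with local Lipschitz continuity of $f$ (the paper cites Zeidler, Corollary 3.9, for precisely this continuation step). The genuine difference lies in how hypothesis \eqref{lowerbound} enters the energy estimate. The paper first performs the change of variables $x(t)=e^{\lambda t}z(t)$, which turns \eqref{aux1} into $\dot z + Q(t,z)=h$ with $Q(t,z)=e^{-\lambda t}f(e^{\lambda t}z)+\lambda z$ monotone in $z$; the term $\langle Q(t,z)-Q(t,0),z\rangle\ge 0$ can then simply be dropped, and the factor $e^{\lambda T}$ reappears on transforming back. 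You instead work with $x$ directly and convert the semi-definiteness of $Df+\lambda I$ into the one-sided estimate $\langle f(x)-f(0),x\rangle \ge -\lambda|x|^2$ by integrating the Jacobian along the segment from $0$ to $x$, absorbing the resulting $-\lambda|x|^2$ term into the Gr\"onwall exponent. The two devices are equivalent and yield bounds of the same form $(|x_0|+\sqrt{T}\,|f(0)|+\|g\|_{L^2})e^{c(\lambda)T}$; yours is marginally more elementary (no transformation), while the paper's cleanly packages the dissipativity as monotonicity of a single operator, a framing it reuses later. Two minor remarks: since $g\in L^2$ only, the local theory should be the Carath\'eodory version of Picard--Lindel\"of (absolutely continuous solutions), which your setup accommodates; and your care in bounding $\|f(x)\|_{L^2}$ through the uniform bound on $x$ and the continuity of $f$ on the resulting compact range, rather than a global Lipschitz constant, is exactly what is needed for the $\|\dot x\|_{L^2}$ part of the $H^1$ estimate --- it does mean $C$ depends on $f$ through its bound on the ball of radius $R$ and not only through $|f(0)|$, but this mild looseness is present in the paper's statement as well, and monotonicity of $C$ in the listed arguments is preserved since $R$ is monotone in each of them.
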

\begin{proof}
(i) Utilizing the  {\em{ transformation}} $x(t) = e^{\lambda t} z(t)$,  equation \eqref{aux1} becomes
\[
\lambda e^{\lambda t}  z(t) + e^{\lambda t} \dot z(t) + f(e^{\lambda t}z(t))= g(t), \quad z(0) = x_0,
\]
hence
\begin{equation} \label{aux2}
\dot z(t) + e^{-\lambda t} f(e^{\lambda t}z(t)) + \lambda z(t)= e^{-\lambda t}g(t).
\end{equation}
Setting
\[
Q(t,z)(t) = e^{-\lambda t} f(e^{\lambda t}z(t)) + \lambda z(t),
\]
we have for  $z$ and $v$ in $\mathbb{R}^n$, and $t\ge0$ by \eqref{lowerbound}

\[
(DQ( t,z)v,v) \ge (Df(z)v +\lambda v,v) \ge 0.
\]
Thus $Q(t,\cdot)$ is monotone for each $t\ge0$, i.e. we have
\[
\langle Q(t,z)-Q(t,y), z-y \rangle \ge 0.
\]
The differential equation for $z$ now reads
\begin{equation} \label{aux3}
\dot z(t) + Q(t,z(t))= h(t), \quad z(0) = x_0
\end{equation}
with $h(t) = e^{-\lambda t} g(t)$.

(ii) {\it A priori estimate.} Let $z \in H^1(0,T;\mathbb{R}^n)$ be a solution of \eqref{aux3}. After multiplication by $z$ and integration,
\[
\begin{array}{l}
\displaystyle \int_0^t \dot z\cdot z \,ds +
\displaystyle \int_0^t (Q(s,z(s)) - Q(s,0))\cdot (z(s) - 0)\, ds = \\
\displaystyle \hspace{1cm}- \int_0^t Q(s,0)\cdot z(s)\,ds +  {\int_0^t h(s)\cdot z(s)\, ds}
\end{array}
\]
By the monotonicity of $Q$ and Young's inequality
{\begin{equation} \label{aprioriest}
\frac{1}{2} |z(t)|^2 \le \frac{1}{2} \left( |x_0|^2 + \int_0^t(|Q(s,0)|^2+|h(s)|^2)\, ds+   \int_0^t
| z(s)|^2\,ds \right) .
\end{equation}
}
Gronwall's inequality implies  that
\begin{equation}\label{eq:kk1}
 |z(t)| \le (|x_0| +\|Q(\cdot,0)\|_{L^2(0,T;\mathbb{R})} +\|h\|_{L^2(0,T;\mathbb{R})})e^{\frac{1}{2}T}=: R \quad \mbox{ a.e. on } [0,T],
\end{equation}
and consequently
\begin{equation}\label{eq:kk2}
 |x(t)| \le (|x_0| + \sqrt{T} |f(0)|  +\|g\|_{L^2(0,T;\mathbb{R})})e^{(\frac{1}{2}+ \lambda)T}\quad \mbox{ a.e. on } [0,T].
\end{equation}
Since $f$ is continuously differentiable, $f$ is locally Lipschitz, i.e. Lipschitz on compact sets of $\mathbb{R}^n$. Moreover, we have the a priori estimate above. Thus  existence and uniqueness of a solution of \eqref{aux1} can be obtained by the principle ''extension or blow up''. We refer e.g. to Corollary 3.9 of \cite{Zeidler1986}.
\end{proof}
Now we are able to deal with the delay differential equation.
We refer to $x$ as a solution to  \eqref{delayeq}, if $x\in C([-b,T],\mathbb{R}^n)$ with $x|_{[0,T]} \in H^1([0,T];\mathbb{R}^n)$, $x|_{[-b,0]} = \varphi$, and  \eqref{delayeq} is satisfied a.e. in $(0,T)$. Unless necessitated for reasons of clarity we shall henceforth not distinguish between $x$ as solution on $[0,T]$ or on $[-b,T]$.
\begin{theorem}[Existence and uniqueness] \label{well-posedness} If $f$, $\varphi$, and $g$ satisfy 
Assumption \ref{A1}, then the delay equation \eqref{delayeq} has a unique solution 
$x \in \raumHeins$. If moreover $g\in H^1(0,T;\mathbb{R}^n)$, then $x \in  H^2(0,T;\mathbb{R}^n)$.
\end{theorem}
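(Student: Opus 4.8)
The natural strategy is the classical \emph{method of steps}, which reduces the delay equation \eqref{delayeq} to a finite sequence of initial value problems of the type \eqref{aux1} already treated in Proposition \ref{L1}. Assume first $\tau > 0$, set $t_k = k\tau$ for $k = 0,1,\dots$, let $N$ be the smallest integer with $N\tau \ge T$, and put $t_N := T$. On the first subinterval $[0,\tau]$ one has $t - \tau \in [-\tau,0] \subset [-b,0]$, so the delayed term is the known datum $A\varphi(t-\tau)$; since $\varphi \in \raumHeins$, the map $t \mapsto A\varphi(t-\tau) + g(t)$ belongs to $L^2(0,\tau;\mathbb{R}^n)$. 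Equation \eqref{delayeq} thus coincides on $[0,\tau]$ with \eqref{aux1} (on the interval $[0,\tau]$, with $x_0 = \varphi(0)$), and Proposition \ref{L1}, applied on $[0,\tau]$ after the obvious translation, yields a unique solution $x \in H^1(0,\tau;\mathbb{R}^n)$.

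Proceeding inductively, on $[t_k,t_{k+1}]$ the shifted argument $t-\tau$ ranges in $[t_{k-1},t_k]$, where $x$ has already been constructed and lies in $H^1 \hookrightarrow L^2$; hence $t \mapsto Ax(t-\tau) + g(t)$ is again in $L^2(t_k,t_{k+1};\mathbb{R}^n)$ and Proposition \ref{L1} produces a unique $H^1$ solution on $[t_k,t_{k+1}]$ with initial value $x(t_k)$ inherited from the previous step. After $N$ steps we obtain $x$ on all of $[0,T]$. Choosing at each step the initial value equal to the terminal value of the preceding piece guarantees that the glued function is continuous across the nodes $t_k$ and matches $\varphi$ at $t=0$; being $H^1$ on each subinterval and continuous, it lies in $\raumHeins$. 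Uniqueness is immediate, since any solution must satisfy the same reduced problem \eqref{aux1} on each subinterval, and these have unique solutions by Proposition \ref{L1}. The degenerate case $\tau = 0$ is treated directly: \eqref{delayeq} then reads $\dot x + (f(x) - Ax) = g$, and the modified nonlinearity $\tilde f(x) := f(x) - Ax$ satisfies Assumption \ref{A1} with $\lambda$ replaced by $\lambda + \|A\|$, since $D\tilde f(x) + (\lambda+\|A\|)I = (Df(x)+\lambda I) + (\|A\|I - A)$ is a sum of positive semi-definite matrices, so Proposition \ref{L1} applies.

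It remains to bootstrap the regularity when $g \in H^1(0,T;\mathbb{R}^n)$, which I would read off directly from the equation rather than redo the method of steps. Since $x \in \raumHeins \hookrightarrow C([-b,T];\mathbb{R}^n)$, the trajectory stays in a compact set $K$ on which $Df$ is bounded; by the chain rule $\frac{d}{dt}f(x(t)) = Df(x(t))\dot x(t) \in L^2(0,T;\mathbb{R}^n)$, whence $f(x) \in H^1(0,T;\mathbb{R}^n)$. The delayed term $t \mapsto x(t-\tau)$ equals $\varphi(\cdot-\tau)$ on $(0,\tau)$ and a translate of $x|_{(0,T-\tau)} \in H^1$ on $(\tau,T)$, and it is continuous at $t=\tau$ because $x(0)=\varphi(0)$; a continuous function that is $H^1$ on each of finitely many subintervals is $H^1$ on the whole interval, so $x(\cdot-\tau) \in H^1(0,T;\mathbb{R}^n)$. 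Solving \eqref{delayeq} for $\dot x$ now gives $\dot x = -f(x) + Ax(\cdot-\tau) + g$ as a sum of $H^1$ functions, hence $\dot x \in H^1(0,T;\mathbb{R}^n)$ and $x \in H^2(0,T;\mathbb{R}^n)$.

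I expect the only genuinely delicate point to be the behaviour at the nodes $t_k$: one must verify that the pieces glue continuously, so that no spurious jump (and thus no distributional singularity in the derivatives) is introduced, and that the delayed source stays in $L^2$ — respectively $H^1$ — across each node. Once continuity at $t=0$ and at the interior nodes is established, the Sobolev gluing argument and the bootstrap to $H^2$ are routine.
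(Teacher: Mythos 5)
Your proposal is correct and follows exactly the paper's route: the authors' proof consists of the single remark that the result follows ``in a standard manner proceeding stepwise in time with stepsize $\tau$'' from Proposition \ref{L1}, which is precisely your method of steps. You in fact supply details the paper omits --- the gluing at the nodes $t_k$, the degenerate case $\tau=0$ (where you correctly absorb $Ax$ into the nonlinearity, noting $(\|A\|I-A)v\cdot v\ge 0$), and the bootstrap to $H^2(0,T;\mathbb{R}^n)$ via continuity of $x(\cdot-\tau)$ at $t=\tau$ --- all of which are sound.
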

\begin{proof} With Proposition \ref{L1} at hand the verification of this result can be obtained in a standard  manner proceeding stepwise in time with stepsize $\tau$.  
\end{proof}
\begin{remark} For the second-order differentiability of the solution $x$ with respect to the delay $\tau$, 
depending on the function space setting to be chosen, the higher regularity $x \in H^2(-b,T;\mathbb{R}^n)$ is required. Even for $\varphi \in H^2(-b,T;\mathbb{R}^n)$ and $g \in H^1(0,T;\mathbb{R}^n)$, this needs a compatibility condition at $t = 0$: 

Indeed, if $x \in H^2(-b,T;\mathbb{R}^n)$, then $\dot x$ has to be continuous at $t=0$. We have 
\[
\dot x(0^-) = \lim_{t \uparrow 0} \dot x(t) =  \lim_{t \uparrow 0} \dot \varphi(t) = \dot \varphi(0)
\]
and
\[
\begin{aligned}
\dot x(0^+)  = \lim_{t \downarrow 0} \dot x(t) &=  
\lim_{t \downarrow 0} (-f(x(t)) + A x(t-\tau) + g(t)) \\
&= - f(\varphi(0)) + A \varphi(-\tau) + g(0).
\end{aligned}
\]
Therefore, to have $x \in H^2(-b,T;\mathbb{R}^n)$, the compatibility condition 
\begin{equation} \label{compat}
\dot \varphi(0) = - f(\varphi(0))  + A \varphi(-\tau) + g(0)
\end{equation}
is needed.
\end{remark}

\begin{remark} \label{Rsemigroup}
Let us point out that the compatibility condition also naturally arises if the delay equation \eqref{delayeq} is treated as abstract equation in function space over the interval $(-b,0)$. To briefly explain the context let us consider the linear, homogenous case, with $f(x)=A_0 \, x$ and $g=0$. For the function space setting, there are two natural choices, namely $C(-b,0;\mathbb{R}^n)$ or $\mathbb{R}^n \times L^2(-b,0;\mathbb{R}^n)$. Choosing the former, we define the infinitesimal generator $\mathcal{A}$ associated to   \eqref{delayeq} by   $\mathcal A y = \frac{d}{ds} y$ with $\text{dom} (\mathcal{A})= \{y \in  C^1(-b,0;\mathbb{R}^n): \frac{d}{ds}y(0) = A_0 y(0) + Ay(-\tau)\}$,   see e.g. \cite[Section 2 and Section19]{hale1971}.

%

  The abstract equation associated to \eqref{delayeq} is then given by
\begin{equation*}
\frac{d}{dt} x(t) = \mathcal{A} x(t), \quad \text{ with } x(0) = \varphi.
\end{equation*}
The semigroup $ e^{\mathcal{A}t}$ generated by $\mathcal{A}$ satisfies $ e^{\mathcal{A}t}\varphi = x(t+\cdot)$  on $(-b,0)$, for all $t\ge 0$, with $x$ the solution that we discussed above. Moreover $e^{\mathcal{A}t} \varphi \in \text{dom} (\mathcal{A})$ for all $t\ge b$. Thus the compatibility condition is satisfied for all $t\ge b$.
\end{remark}

\section{Differentiability with respect to the time delay $\tau$} \label{S3} 
~
By Theorem \ref{well-posedness}, for each $\tau \in [0,b]$ the delay equation \eqref{delayeq} has a unique solution $x$ that we denote by $x[\tau]$. The mapping $\tau \mapsto  x[\tau]$ is well defined from $[0,b]$ to $C([-b,T],\mathbb{R}^n)$
and to $H^1(-b,T;\mathbb{R}^n)$, if $\varphi \in H^1(-b,0;\mathbb{R}^n)$. In the remainder of this section,  we discuss the first derivative of the mapping $\tau \mapsto x[\tau]$.

In principle, we might adapt the proof of an analogous theorem of differentiability from Casas et al.  \cite{casas_mateos_troeltzsch2019} that was performed for the optimization of time delays in semilinear parabolic equations with time delay. Here, we present a different proof via the implicit function theorem. We can benefit from this strategy also for the second derivative. 

To this end, following Hale and Ladeira \cite{hale_ladeira1993}, we transform equation \eqref{delayeq} in the following way: We set
\[
\phi(t) = \left\{
\begin{array}{ll}
\varphi(t), &t \in [-b,0],\\
\varphi(0), & t \in (0,T],
\end{array}
\right.
\]
and
\[
z(t) = x(t) - \phi(t), \quad t \in [-b,T].
\]
We observe that $\phi \in  H^1(-b,T;\mathbb{R}^n)$,  $z(t)=0$ on $[-b,0]$, and $x(t) = z(t) + \phi(t)$. 
For convenience, we introduce the following subspace of $H^1(-b,T;\mathbb{R}^n)$:
\[
H_{[0]}^1(-b,T;\mathbb{R}^n) = \{z \in H^1(-b,T;\mathbb{R}^n): z(t) = 0 \mbox{ in } [-b,0]\}.
\]
In addition, we define $F: H_{[0]}^1(-b,T;\mathbb{R}^n) \times [0,b] \to H_{[0]}^1(-b,T;\mathbb{R}^n)$ by

\begin{equation}\label{def F}
(F(z,\tau))(t) = \left\{
\begin{array}{ll}
0, & t\in [-b,0],\\[1ex]
\displaystyle \int_0^t \big\{(-f(z + \phi) + g)(s) + (A(z + \phi))(s-\tau))\big\}\, ds, & t \in [0,T].
\end{array}
\right.
\end{equation}
Then \eqref{delayeq} for $x$ is equivalent to the equation for $z \in H_{[0]}^1(-b,T;\mathbb{R}^n)$,
\begin{equation}\label{delayeqz}
z(t) = (F(z,\tau))(t), \ t \in [-b,T].
\end{equation}
This transformation justifies to work in the  closed subspace $\raumH$ of $H^1(-b,T;\mathbb{R})$.

By Theorem \ref{well-posedness} and the equivalence of \eqref{delayeqz} with \eqref{delayeq}, the mapping $[0,b] \ni \tau \mapsto z \in H_{[0]}^1(-b,T;\mathbb{R}^n)$ is well defined.
To express the dependency of this solution on $\tau$, we denote it by $z[\tau]$.  To study its differentiability properties, we use the following notation:
\[
\begin{aligned}
\dot z[\tau](t) := \partial_t z[\tau](t), & \qquad \ddot z[\tau](t) := \partial_t^2 z[\tau](t) \\
z'[\tau](t) : =  \partial_\tau z[\tau](t), & \qquad z''[\tau](t) : =  \partial_{\tau}^2 z[\tau](t).
\end{aligned}
\]
\begin{lemma} \label{Lshift} The parameterized shift mapping $S: (z,\tau) \mapsto z(\cdot-\tau)$ is continuously Fr\'echet-differentiable from 
$H_{[0]}^1(-b,T;\mathbb{R}^n) \times [0,b\fredi{]}$ to $L^2(0,T;\mathbb{R}^n)$. The derivative is
\begin{equation} \label{F-derivative}
(DS(z,\tau)(h,\delta))(t) =  h(t-\tau)-\dot z(t-\tau)\delta, \quad t \in [0,T].
\end{equation}
\end{lemma}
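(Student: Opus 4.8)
The plan is to exploit that $S$ is linear in its first argument, so that the only genuine nonlinearity is the dependence on $\tau$, and to reduce the whole estimate to the continuity of translation in $L^2$. First I would record the elementary fact that for $t\in[0,T]$ and $\tau\in[0,b]$ one has $t-\tau\in[-b,T]$, so that $z(\cdot-\tau)$ is well defined and $\|z(\cdot-\tau)\|_{L^2(0,T;\mathbb{R}^n)}\le\|z\|_{L^2(-b,T;\mathbb{R}^n)}$; in particular $z\mapsto S(z,\tau)$ is a bounded linear operator into $L^2(0,T;\mathbb{R}^n)$ for each fixed $\tau$. The candidate derivative $(h,\delta)\mapsto h(\cdot-\tau)-\dot z(\cdot-\tau)\delta$ is then a bounded linear map from $H^1_{[0]}(-b,T;\mathbb{R}^n)\times\mathbb{R}$ to $L^2(0,T;\mathbb{R}^n)$, since $\dot z(\cdot-\tau)\in L^2(0,T;\mathbb{R}^n)$.

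Next I would form the increment and subtract the candidate derivative. Using linearity in the first variable, the expression $S(z+h,\tau+\delta)(t)-S(z,\tau)(t)-\big(h(t-\tau)-\dot z(t-\tau)\delta\big)$ splits into the two pieces $\big[h(t-\tau-\delta)-h(t-\tau)\big]$ and $\big[z(t-\tau-\delta)-z(t-\tau)+\dot z(t-\tau)\delta\big]$. For the first I would use that $h$ is absolutely continuous, write the difference as $\int_{t-\tau}^{t-\tau-\delta}\dot h(s)\,ds$, apply the Cauchy--Schwarz inequality and Fubini's theorem, obtaining $\|h(\cdot-\tau-\delta)-h(\cdot-\tau)\|_{L^2(0,T;\mathbb{R}^n)}\le|\delta|\,\|h\|_{H^1(-b,T;\mathbb{R}^n)}$; this is $O(|\delta|\,\|h\|)=o(\|(h,\delta)\|)$ and is harmless.

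The main work is the second piece. Writing $z(t-\tau-\delta)-z(t-\tau)=\int_{t-\tau}^{t-\tau-\delta}\dot z(s)\,ds$ and using $\dot z(t-\tau)\,\delta=-\int_{t-\tau}^{t-\tau-\delta}\dot z(t-\tau)\,ds$, this piece becomes the remainder $R(t)=\int_{t-\tau}^{t-\tau-\delta}\big(\dot z(s)-\dot z(t-\tau)\big)\,ds$. I would then apply Cauchy--Schwarz in $s$, integrate over $t\in[0,T]$, interchange the order of integration, and after the substitution $s=t-\tau-r$ arrive at $\|R\|_{L^2(0,T;\mathbb{R}^n)}^2\le|\delta|\int_0^{|\delta|}\|\dot z(\cdot-\tau-r)-\dot z(\cdot-\tau)\|_{L^2(0,T;\mathbb{R}^n)}^2\,dr$. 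Since $\dot z\in L^2$ (extended by zero, which is consistent because $\dot z=0$ on $[-b,0]$), the continuity of translation in $L^2$ gives $\|\dot z(\cdot-\tau-r)-\dot z(\cdot-\tau)\|_{L^2(0,T;\mathbb{R}^n)}\to0$ as $r\to0$, whence $\|R\|_{L^2}=o(|\delta|)$. This is the step I expect to be the crux, precisely because $\dot z$ is merely square integrable, so the pointwise term $\dot z(t-\tau)$ cannot be controlled uniformly and one must rely on $L^2$-mean continuity rather than classical continuity; this also dictates the choice of $L^2(0,T;\mathbb{R}^n)$ as the target space.

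Finally I would verify continuous differentiability by estimating the operator-norm difference of the derivatives on $\|(h,\delta)\|\le1$ for $(z,\tau)$ near $(z_0,\tau_0)$. The $h$-part $\|h(\cdot-\tau)-h(\cdot-\tau_0)\|_{L^2}\le|\tau-\tau_0|\,\|h\|_{H^1}$ follows again from the Cauchy--Schwarz/Fubini bound and is Lipschitz in $\tau$ uniformly in $h$. For the $\delta$-part I would split $\dot z(\cdot-\tau)-\dot z_0(\cdot-\tau_0)$ into $\dot z(\cdot-\tau)-\dot z_0(\cdot-\tau)$, bounded by $\|z-z_0\|_{H^1}$, plus $\dot z_0(\cdot-\tau)-\dot z_0(\cdot-\tau_0)$, which tends to $0$ as $\tau\to\tau_0$ by translation continuity of the fixed function $\dot z_0\in L^2$. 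Combining these yields $\|DS(z,\tau)-DS(z_0,\tau_0)\|\to0$, establishing the claimed continuity of the derivative.
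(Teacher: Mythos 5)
Your proposal is correct and follows essentially the same route as the paper's proof: the identical splitting of the increment into an $h$-shift term (bounded by $|\delta|\,\|h\|_{H^1}$ via the fundamental theorem of calculus with Cauchy--Schwarz and Fubini) and a Taylor remainder in $z$ controlled by the mean continuity of translation in $L^2$, followed by the same two-term decomposition $\dot z(\cdot-\tau)-\dot z_0(\cdot-\tau)$ plus $\dot z_0(\cdot-\tau)-\dot z_0(\cdot-\tau_0)$ for continuity of the derivative. The only cosmetic differences are your parameterization of the remainder integral over $[0,|\delta|]$ instead of the paper's $[0,1]$, and your explicit uniform (operator-norm) bound for the $h$-part, which the paper leaves implicit.
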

\begin{proof}
We first confirm that  \eqref{F-derivative} is the Fr\'{e}chet derivative of $(z,\tau) \mapsto z(\cdot-\tau)$:
Let $0 \le \tau < b$ and $|\delta| < b -\tau$ so that $\tau + \delta \le b$. We have
\[
\begin{aligned}
&(S(z+h,\tau + \delta)- S(z,\tau))(t)= (z+h)(t-(\tau + \delta)) - z(t-\tau) \\
&\qquad = z(t-\tau - \delta) - z(t-\tau) +   h(t-\tau) + h(t-\tau - \delta)-h(t-\tau)\\ 
&\qquad = h(t-\tau) - \int_0^1 \dot z(t-\tau - s \delta) \,\delta  ds + \int_0^1\dot h(t-\tau - s \delta) \delta\, ds
 .\\
&\qquad = h(t-\tau) - \dot z(t-\tau) \delta   -   \delta \int_0^1 (\dot z(t-\tau - s \delta) - \dot z(t-\tau)) \, ds + R_h(h,\delta)\\
&\qquad = h(t-\tau) - \dot z(t-\tau) \delta + R_z(h,\delta) + R_h(h,\delta), \\
\end{aligned}
\]
where the remainder terms $R_z$ and $R_h$ are defined by 
\[
\begin{aligned}
&R_h(h,\delta) =  \delta  \, \int_0^1\dot h(t-\tau - s \delta)\, ds,\\
&R_z(h,\delta)= \delta \int_0^1 (\dot z(t-\tau - s \delta) - \dot z(t-\tau)) \, ds.
\end{aligned}
\]
Here, we have used that
$
\partial_s z(t-s) = - \dot z(t-s)
$
which follows from the definition of the weak derivative $\dot z(t-\tau)$ via testing with a smooth function.

For convenience, in this proof we introduce the abbreviations
\[
\|\cdot\|_{H_{[0]}^1}:=\|\cdot\|_{H_{[0]}^1(-b,T;\mathbb{R}^n)}, \qquad \|\cdot\|_{L^2} := \|\cdot\|_{L^2(0,T;\mathbb{R}^n)}.
\]
The $L^2$-norm of the remainder terms $R_z$ and $R_h$, divided by $\|h\|_{H_{[0]}^1}+|\delta|$, tends to zero, if $\delta \to 0$:
\[
\begin{aligned}
\|R_h\|_{L^2}^2 &= \int_{0}^T\left|\int_0^1 \dot h(t-\tau-s\delta)\,ds\right|^2\delta^2dt\\
&\le  \int_{0}^T\int_0^1 |\dot h(t-\tau-s\delta)|^2\,ds\,dt\, \delta^2 = \int_0^1\int_{0}^{T-\tau-s\delta}|\dot h(\sigma)|^2d\sigma \, ds \, \delta^2\\
&\le \int_0^1\int_{0}^T\fredi{|}\dot h(\sigma)\fredi{|}^2 d\sigma \, \delta^2 = \|h\|_{H_{[0]}^1}^2 \delta^2,
\end{aligned}
\]
notice that $h(\sigma)=0$ for $\sigma \le 0$.
Therefore $\|R_h\|_{L^2} \le \delta
\|h\|_{H_{[0]}^1} $
and hence 
\begin{equation}\label{eq:aux1}
\frac{\|R_h\|_{L^2}}{\|h\|_{H_{[0]}^1}+|\delta|} \to 0 \mbox{ if } \|h\|_{H_{[0]}^1}+|\delta| \to 0.
\end{equation}
Analogously, we obtain
\[
\frac{1}{\delta^2}\|R_z(h,\delta)\|_{L^2}^2 \le \int_0^1 \int^T_{0} |\dot z(t-\tau - s \delta) - \dot z(t-\tau)|^2\, dtds.
\]
The function $\dot z$ belongs to $L^2(-b,T;\mathbb{R}^n)$, and hence by the continuity of the shift operator in $L^2(-b,T;\mathbb{R}^n)$, see \cite[pg. 199]{hewitt_stromberg1965}  we obtain
\[
\frac{\|R_z(h,\delta)\|_{L^2}}{\|h\|_{H_{[0]}^1}+|\delta|} \le \frac{1}{|\delta|}\|R_z(h,\delta)\|_{L^2} \to 0, \mbox{ if }   \|h\|_{H_{[0]}^1}+|\delta| \to 0.
\]
The properties of the remainder terms confirm that \eqref{F-derivative} is the expression of the Fr\'echet derivative of the shift mapping $S$. 
The derivative depends continuously on $(z,\tau)$: Indeed, we have     
\[
\|(DS(z,\tau)-D(S(y,\sigma))(h,\delta)\|_{L^2} \le \|(\dot z(\cdot-\tau) - \dot y(\cdot-\sigma))\|_{L^2}|\delta| + \|h(\cdot-\tau)-h(\cdot-\sigma)\|_{L^2}.\\
\]
The second term tends to 0 as $|\tau-\sigma|\to 0$ with the same argument as the one which led to \eqref{eq:aux1}.  For the first one we estimate
\[
 \|(\dot z(\cdot-\tau) - \dot y(\cdot-\sigma))\|_{L^2}\le  \|(\dot z(\cdot-\tau) - \dot y(\cdot-\tau))\|_{L^2}+ \|(\dot y(\cdot-\tau) - \dot y(\cdot-\sigma))\|_{L^2}.
\]
For $y \to z$ in $\raumH$, the first term obviously tends to zero. For $\sigma \to \delta$, the second term tends to zero by the continuity of the shift operator in $L^2$.
These estimates show the continuity of the derivative. In the case  $\tau = b$, we assume $\delta < 0$ and obtain the result for  the left derivative of $S$ in $b$.
\end{proof}

{\bf Notation.}  Preparing the next results, we introduce the following mappings defined in $\mathcal{H}= H^1_{[0]}(-b,T;\mathbb{R}^n)\times [0,b)$, namely $G: \mathcal{H} \to L^2(0,T;\mathbb{R}^n)$ and $\mathcal{F}: \mathcal{H} \to H^1(-b,T;\mathbb{R}^n)$ defined by
\[
\begin{array}{l}
G(z,\tau) = \big\{-f(z+\phi) + A(z(\cdot-\tau)+\phi(\cdot-\tau))\big\}{|_{[0,T]}} + g,\\[2ex]
\mathcal{F}(z,\tau) = z - F(z,\tau),
\end{array}
\]
where $F$ is defined in \eqref{def F}.
Notice that
\[
F(z,\tau)(t) = \int_0^t G(z,\tau)(s) \, ds, \quad \forall t \in [0,T].
\] 
The space $H_{[0]}^1(-b,T;\mathbb{R}^n)$ is continuously embedded in $C([-b,T],\mathbb{R}^n)$ and the superposition operator $v \mapsto f(v)$ is of class $C^1$ in $C([-b,T],\mathbb{R}^n)$, because $f: \mathbb{R}^n \to \mathbb{R}^n$ is of class $C^1$. Moreover, $\phi$ belongs to  $H^1(-b,T;\mathbb{R}^n)$. Therefore,  the mapping  $z \mapsto f(z+\phi)$ is of class $C^1$ from $H_{[0]}^1(-b,T;\mathbb{R}^n)$ to $C([-b,T],\mathbb{R}^n) \hookrightarrow L^2(-b,T;\mathbb{R}^n)$. 

Thanks to Lemma \ref{Lshift} and the differentiability of $f$, the operator $G$ is of class $C^1$ from $\mathcal{H}$
to $L^2(0,T;\mathbb{R}^n)$. Therefore, via integration, $F$ is class $C^1$ from $\mathcal{H}$ to $H^1_{[0]}(-b,T;\mathbb{R}^n).$

In view of these arguments, we have proved the following result:
\begin{lemma}
The mapping $(z,\tau) \mapsto F(z,\tau)$ is continuously differentiable from $H_{[0]}^1(-b,T;\mathbb{R}^n) \times [0,b\fredi{]}$ to $H_{[0]}^1(-b,T;\mathbb{R}^n)$.
\end{lemma}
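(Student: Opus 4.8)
The plan is to exploit the factorization $F(z,\tau)(t) = \int_0^t G(z,\tau)(s)\,ds$ recorded just above the statement, together with the additive structure of $G$, reducing the claim to the $C^1$-regularity of $G$ as a map into $L^2(0,T;\mathbb{R}^n)$ followed by composition with an integration operator. Concretely, I would write $G = G_1 + G_2 + g$, where $G_1(z,\tau) = -f(z+\phi)|_{[0,T]}$, $G_2(z,\tau) = A\big(z(\cdot-\tau)+\phi(\cdot-\tau)\big)|_{[0,T]}$, and the constant term $(z,\tau)\mapsto g\in L^2(0,T;\mathbb{R}^n)$ is trivially $C^\infty$. Since sums of $C^1$ maps are $C^1$, it suffices to treat $G_1$ and $G_2$ separately and then to pass through the integral.

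For $G_1$ I would use the continuous Sobolev embedding $H_{[0]}^1(-b,T;\mathbb{R}^n) \hookrightarrow C([-b,T],\mathbb{R}^n)$ together with $\phi \in H^1(-b,T;\mathbb{R}^n) \hookrightarrow C([-b,T],\mathbb{R}^n)$, so that $z \mapsto z + \phi$ is an affine continuous map into $C([-b,T],\mathbb{R}^n)$. Because $f \in C^1(\mathbb{R}^n,\mathbb{R}^n)$, the associated Nemytskii (superposition) operator $v \mapsto f(v)$ is of class $C^1$ from $C([-b,T],\mathbb{R}^n)$ to itself, with derivative $h \mapsto Df(v)\,h$. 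Composing with the restriction to $[0,T]$ and the embedding $C([0,T],\mathbb{R}^n) \hookrightarrow L^2(0,T;\mathbb{R}^n)$ then shows that $G_1$ is $C^1$ into $L^2(0,T;\mathbb{R}^n)$, with $\partial_z G_1(z,\tau)h = -Df(z+\phi)h|_{[0,T]}$ and $\partial_\tau G_1 = 0$.

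For $G_2$, the dependence on $z$ and $\tau$ through $z(\cdot-\tau)$ is exactly the shift map $S$, which is $C^1$ from $H_{[0]}^1(-b,T;\mathbb{R}^n)\times[0,b]$ to $L^2(0,T;\mathbb{R}^n)$ by Lemma \ref{Lshift}. The remaining term $\tau \mapsto \phi(\cdot-\tau)$ is the same shift applied to the fixed function $\phi \in H^1(-b,T;\mathbb{R}^n)$; its $C^1$-dependence on $\tau$, with derivative $-\dot\phi(\cdot-\tau)$, follows from the computation in the proof of Lemma \ref{Lshift} specialized to $h=0$ with $z$ replaced by $\phi$, so that only the $R_z$-type remainder, controlled by $L^2$-continuity of the shift, is involved. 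Left multiplication by the constant matrix $A$ is a bounded linear, hence $C^\infty$, operator on $L^2(0,T;\mathbb{R}^n)$, so $G_2$ is $C^1$ with $DG_2(z,\tau)(h,\delta) = A\big(h(\cdot-\tau)-\dot z(\cdot-\tau)\delta-\dot\phi(\cdot-\tau)\delta\big)|_{[0,T]}$.

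Finally, let $\mathcal{I}: L^2(0,T;\mathbb{R}^n)\to H_{[0]}^1(-b,T;\mathbb{R}^n)$ be the linear operator sending $u$ to the function equal to $\int_0^t u(s)\,ds$ on $[0,T]$ and to $0$ on $[-b,0]$; this lands in $H_{[0]}^1(-b,T;\mathbb{R}^n)$ because the primitive vanishes at $t=0$, so the extension by zero is globally $H^1$, and $\mathcal{I}$ is bounded since $\|\mathcal{I}u\|_{H^1} \le C\|u\|_{L^2}$. As $F = \mathcal{I}\circ G$ with $G$ of class $C^1$ into $L^2(0,T;\mathbb{R}^n)$, the chain rule for the composition of a $C^1$ map with a bounded linear operator gives that $F$ is $C^1$ from $H_{[0]}^1(-b,T;\mathbb{R}^n)\times[0,b]$ (the endpoint $\tau=b$ being handled via the one-sided derivative as in Lemma \ref{Lshift}) into $H_{[0]}^1(-b,T;\mathbb{R}^n)$, with $DF = \mathcal{I}\circ DG$. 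I expect the only genuinely delicate point to be the $C^1$-regularity, rather than mere Gâteaux differentiability, of the Nemytskii operator induced by $f$, i.e. the norm continuity of $z\mapsto Df(z+\phi)$; this is precisely why the intermediate space $C([-b,T],\mathbb{R}^n)$, and not $L^2$, is indispensable, the shift contributions having already been absorbed into Lemma \ref{Lshift}.
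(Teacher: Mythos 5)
Your proposal is correct and follows essentially the same route as the paper: both factor $F = \mathcal{I}\circ G$ through the bounded integration operator, establish $C^1$-regularity of the Nemytskii term $z\mapsto -f(z+\phi)$ via the embedding $H_{[0]}^1(-b,T;\mathbb{R}^n)\hookrightarrow C([-b,T],\mathbb{R}^n)$, and handle the delay terms $z(\cdot-\tau)$ and $\phi(\cdot-\tau)$ by Lemma \ref{Lshift} (your explicit treatment of the fixed shift $\tau\mapsto\phi(\cdot-\tau)$ via the $R_z$-remainder is exactly what the paper leaves implicit). Your identification of the norm continuity of $z\mapsto Df(z+\phi)$ in the sup-norm as the delicate point, forcing the intermediate space $C([-b,T],\mathbb{R}^n)$ rather than $L^2$, matches the paper's reasoning precisely.
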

\begin{theorem} \label{differentiability} The mapping $\tau \mapsto z[\tau]$ is continuously differentiable from $[0,b\fredi{]}$ to $H^1_{[0]}(-b,T;\mathbb{R}^n)$.
\end{theorem}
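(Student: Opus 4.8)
The plan is to apply the implicit function theorem to the map $\mathcal{F}(z,\tau) = z - F(z,\tau)$, whose zero set encodes the solution: by construction $z[\tau]$ is characterized by $\mathcal{F}(z[\tau],\tau) = 0$, and the preceding lemma guarantees that $\mathcal{F}$ is of class $C^1$ from $H^1_{[0]}(-b,T;\mathbb{R}^n) \times [0,b)$ into $H^1_{[0]}(-b,T;\mathbb{R}^n)$. The only hypothesis of the implicit function theorem not yet available is that the partial derivative $D_z\mathcal{F}(z,\tau) = \mathrm{Id} - D_zF(z,\tau)$ is a topological isomorphism of $H^1_{[0]}(-b,T;\mathbb{R}^n)$ for each admissible $(z,\tau)$. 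Establishing this is the heart of the argument; the remainder is the standard machinery of the theorem.

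To identify $D_z\mathcal{F}$, I first differentiate $F$ in $z$. From the definition of $G$ and Lemma \ref{Lshift} one gets, for $h \in H^1_{[0]}(-b,T;\mathbb{R}^n)$,
\[
(D_zF(z,\tau)h)(t) = \int_0^t \big(-Df(z(s)+\phi(s))\,h(s) + A\,h(s-\tau)\big)\,ds, \quad t \in [0,T],
\]
and $(D_zF(z,\tau)h)(t) = 0$ on $[-b,0]$. Hence solving $D_z\mathcal{F}(z,\tau)h = r$ for given $r \in H^1_{[0]}(-b,T;\mathbb{R}^n)$ amounts, after differentiation in $t$ and using $h(0) = r(0) = 0$, to solving the linear delay initial value problem
\[
\dot h(t) + Df(z(t)+\phi(t))\,h(t) = A\,h(t-\tau) + \dot r(t) \ \text{ in } (0,T), \qquad h \equiv 0 \text{ on } [-b,0].
\]
Thus the invertibility of $D_z\mathcal{F}(z,\tau)$ is exactly the well-posedness, with a bound $\|h\|_{H^1_{[0]}(-b,T;\mathbb{R}^n)} \le C\,\|r\|_{H^1_{[0]}(-b,T;\mathbb{R}^n)}$, of this linearization of \eqref{delayeq}.

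I would prove this well-posedness exactly as in Theorem \ref{well-posedness}, by stepping through the intervals $[0,\tau],[\tau,2\tau],\dots$ of length $\tau$: on the first interval the delay term $A\,h(\cdot-\tau)$ involves only the prescribed zero values on $[-b,0]$, so the problem is a linear ODE with continuous coefficient matrix $Df(z+\phi)$ (continuous because $z+\phi \in C([-b,T],\mathbb{R}^n)$) and $L^2$ right-hand side, hence uniquely solvable in $H^1$; each subsequent step feeds the solution from the previous interval into the delay term, and finitely many steps cover $[0,T]$. This yields existence and uniqueness, so $D_z\mathcal{F}(z,\tau)$ is a bijection. The quantitative bound follows from an energy estimate in the spirit of Proposition \ref{L1}: testing the equation with $h$ and invoking the positivity property $Df(x)+\lambda I \ge 0$ of Assumption \ref{A1} to absorb the reaction term, then using Young's inequality on the delay and forcing terms together with Gronwall's inequality, controls $\|h\|_{L^\infty(0,T)}$, and hence $\|h\|_{L^2}$, by $\|\dot r\|_{L^2}$; reading $\dot h$ off the equation then bounds $\|\dot h\|_{L^2}$ as well. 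Consequently $\|h\|_{H^1_{[0]}(-b,T;\mathbb{R}^n)} \le C\,\|r\|_{H^1_{[0]}(-b,T;\mathbb{R}^n)}$, and $D_z\mathcal{F}(z,\tau)$ is an isomorphism. I expect this step, the uniform bound for the linearized delay equation, to be the main obstacle, since it is where the structural monotonicity assumption must be exploited to obtain an estimate that is global in time rather than valid only on a short horizon.

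With $\mathcal{F} \in C^1$ and $D_z\mathcal{F}(z[\tau_0],\tau_0)$ an isomorphism, the implicit function theorem produces, near each $\tau_0 \in (0,b)$, a $C^1$ map $\tau \mapsto \tilde z(\tau)$ with $\mathcal{F}(\tilde z(\tau),\tau) = 0$; by the uniqueness part of Theorem \ref{well-posedness} this map coincides with $\tau \mapsto z[\tau]$, which is therefore continuously differentiable on $(0,b)$, with derivative $z'[\tau] = -\,[D_z\mathcal{F}(z[\tau],\tau)]^{-1}\,\partial_\tau\mathcal{F}(z[\tau],\tau)$. The one-sided derivatives at the endpoints $\tau = 0$ and $\tau = b$ are obtained from the corresponding one-sided version of the argument, using the left Fréchet derivative of the shift map at $\tau = b$ already provided by Lemma \ref{Lshift}. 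This establishes that $\tau \mapsto z[\tau]$ is continuously differentiable from $[0,b]$ to $H^1_{[0]}(-b,T;\mathbb{R}^n)$.
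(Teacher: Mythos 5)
Your proposal is correct and takes essentially the same route as the paper: both apply the implicit function theorem to $\mathcal{F}(z,\tau)=z-F(z,\tau)$, using the preceding lemma for $\mathcal{F}\in C^1$ and reducing the invertibility of $D_z\mathcal{F}(z,\tau)=I-D_zF(z,\tau)$ to the stepwise-in-time well-posedness of the linear delay equation $\dot h(t)+Df(z(t)+\phi(t))h(t)=Ah(t-\tau)+\dot r(t)$ with $h\equiv 0$ on $[-b,0]$. The only (harmless) deviation is that you invoke the monotonicity $Df+\lambda I\ge 0$ for the a priori bound, whereas this is unnecessary at this linear stage -- since $z+\phi\in C([-b,T],\mathbb{R}^n)$ the coefficient matrix $Df(z(\cdot)+\phi(\cdot))$ is bounded on the compact interval, so Gronwall (or, as the paper notes, standard linear ODE theory) yields the global-in-time bound $\|h\|_{\raumH}\le C\|r\|_{\raumH}$ unconditionally.
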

\begin{proof} The function $z[\tau]$ is the unique solution of the equation $\mathcal{F}(z,\tau)=0$. Notice that existence and uniqueness of $z[\tau]$ follow from Thm. \ref{well-posedness}.  With $F$, also $\mathcal{F} = I -F$ is of class $C^1$ in $\raumH$. To show the result, we invoke the implicit function theorem. 

Therefore, we confirm that $D_z \mathcal{F}(z,\tau)$ is an isomorphism. We have $D_z \mathcal{F}(z,\tau) = I - D_z F(z,\tau)$, hence we have to consider the equation 
\[
v - D_z F(z,\tau) v = d
\]
in $H^1_{[0]}(-b,T;\mathbb{R}^n)$. More detailed, this equation for $v \in H^1_{[0]}(-b,T;\mathbb{R}^n)$ reads
\[
v(t) + \int_0^t\left\{ Df(z(s)+\phi(s))v(s) - A v(s-\tau)\right\}ds = d(t), \qquad t \in [0,T],
\]
or equivalently
\[
\begin{aligned}
&\dot v(t) +  Df(z(t)+\phi(t))v(t)= A v(t-\tau) + \dot d(t),\quad t \in (0,T],\\[1ex]
&v(t) = 0, \quad t \in [-b,0].
\end{aligned}
\]
For each $d \in \raumH$, this linear delay equation has a unique solution $v \in \raumH$. This can be shown stepwise in time, analogously to Theorem \ref{well-posedness}. The arguments are  even simpler, because we can use a standard existence and uniqueness theorem for systems of linear ordinary differential equations. The mapping $\dot d \mapsto v$ is continuous from $L^2(0,T;\mathbb{R}^n)$ to $\raumH$ and hence $D_z \mathcal{F}$ is an isomorphism. 

Since $\mathcal{F}$ is of class $C^1$, the desired result follows from the implicit function theorem.
\end{proof}

\begin{corollary} \label{Cor_x'}
The mapping $\tau \mapsto x[\tau]$ is continuously differentiable from $[0,b\fredi{]}$ to 
$H^1(-b,T;\mathbb{R}^n)$. Its derivative $w[\tau]:=x'[\tau]$ is the unique solution of the delay equation 
\begin{equation} \label{equation_for_x'}
\begin{array}{ll}
\partial_t w(t)+Df(x[\tau](t))w(t) =  A w(t-\tau) - A \dot x[\tau](t-\tau),& t \in (0,T],\\[1ex]
w(t) = 0, & t \in [-b,0].
\end{array}
\end{equation}
Moreover we have
\begin{equation}  \label{eq:k11}
\partial_t\partial_\tau x[\tau](\cdot)= \partial_{\tau}\partial_t x[\tau](\cdot) \text{ in } L^2(0,T;\mathbb{R}^n).
\end{equation}
\end{corollary}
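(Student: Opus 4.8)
The plan is to transfer everything from the already-established differentiability of $\tau \mapsto z[\tau]$ in Theorem~\ref{differentiability} through the relation $x[\tau] = z[\tau] + \phi$. Since the extension $\phi \in H^1(-b,T;\mathbb{R}^n)$ does not depend on $\tau$, the curve $\tau \mapsto x[\tau]$ inherits the $C^1$-regularity of $\tau \mapsto z[\tau]$ as a map into $H^1(-b,T;\mathbb{R}^n)$, and its derivative is $x'[\tau] = z'[\tau] =: w[\tau]$; in particular $w \equiv 0$ on $[-b,0]$ because $z[\tau]$ vanishes there for every $\tau$. This settles the first assertion.

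To obtain \eqref{equation_for_x'}, I would differentiate the defining identity $\mathcal{F}(z[\tau],\tau)=0$ with respect to $\tau$. Because the hypotheses of the implicit function theorem were verified in the proof of Theorem~\ref{differentiability} (in particular $D_z\mathcal{F}(z,\tau) = I - D_zF(z,\tau)$ is an isomorphism), the chain rule gives
\[
D_z\mathcal{F}(z[\tau],\tau)\,w[\tau] = -D_\tau\mathcal{F}(z[\tau],\tau) = D_\tau F(z[\tau],\tau).
\]
Here the two partials of $F$ in \eqref{def F} must be computed. The $z$-derivative is $D_zF(z,\tau)v(t) = \int_0^t\{-Df(x[\tau](s))v(s) + Av(s-\tau)\}\,ds$, as already used above. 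For the $\tau$-derivative only the shift term $A(z+\phi)(\cdot-\tau) = Ax[\tau](\cdot-\tau)$ depends on $\tau$, and Lemma~\ref{Lshift} identifies its $\tau$-derivative as $-A\dot x[\tau](\cdot-\tau)$, whence $D_\tau F(z[\tau],\tau)(t) = -\int_0^t A\dot x[\tau](s-\tau)\,ds$. Substituting yields the integral (fixed-point) form of the equation for $w$; differentiating it in $t$ — legitimate since $w \in H^1_{[0]}(-b,T;\mathbb{R}^n)$ — converts it into the differential form \eqref{equation_for_x'} with homogeneous history $w\equiv 0$ on $[-b,0]$. Uniqueness is then immediate, since the equation is exactly $D_z\mathcal{F}(z[\tau],\tau)w = D_\tau F(z[\tau],\tau)$ and $D_z\mathcal{F}$ is an isomorphism.

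For the commutation \eqref{eq:k11} I would observe that $\partial_t : H^1(-b,T;\mathbb{R}^n)\to L^2(-b,T;\mathbb{R}^n)$ is a bounded linear operator and that differentiation with respect to the scalar parameter $\tau$ commutes with bounded linear operators. Applying this to the $C^1$-curve $\tau\mapsto x[\tau]$ gives $\partial_\tau(\partial_t x[\tau]) = \partial_t(\partial_\tau x[\tau]) = \partial_t w[\tau]$ in $L^2$, which is \eqref{eq:k11}. I expect the only genuinely delicate point to be the correct evaluation of $D_\tau F$ via Lemma~\ref{Lshift} — getting both the sign and the $\dot x[\tau](t-\tau)$ term right — together with the justification that the $t$-differentiation of the integral equation is valid; the remaining steps are bookkeeping, and the symmetry of the mixed derivatives is essentially free once $\partial_t$ is recognized as a bounded linear map.
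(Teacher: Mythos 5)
Your proposal is correct, and for the main assertions it takes essentially the paper's route: transfer of $C^1$-regularity through $x[\tau]=z[\tau]+\phi$ (with the $\tau$-independence of $\phi$ and $z'[\tau]\in H^1_{[0]}(-b,T;\mathbb{R}^n)$ giving $w\equiv 0$ on $[-b,0]$), followed by implicit differentiation of the fixed-point equation; the paper phrases this as differentiating the integral equation for $x[\tau]$ directly, while you phrase it as the implicit-function-theorem identity $D_z\mathcal{F}(z[\tau],\tau)\,w[\tau]=D_\tau F(z[\tau],\tau)$ with $D_\tau F$ computed from Lemma \ref{Lshift} --- the same computation, with the bonus that uniqueness of $w$ is immediate from $D_z\mathcal{F}$ being an isomorphism, which the paper gets from the stepwise solvability of the linear delay equation established in Theorem \ref{differentiability}. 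The one place you genuinely diverge is \eqref{eq:k11}: the paper proves it by differentiating the right-hand side of the state equation \eqref{eq:k12} with respect to $\tau$ and comparing the result with \eqref{equation_for_x'}, whereas you note that $\partial_t\colon H^1(-b,T;\mathbb{R}^n)\to L^2(-b,T;\mathbb{R}^n)$ is a bounded linear operator and hence commutes with differentiation of the $C^1$ curve $\tau\mapsto x[\tau]$; your argument is cleaner and yields the existence of $\partial_\tau\partial_t x[\tau]$ in $L^2$ for free, while the paper's version additionally exhibits that derivative through the structure of the delay equation. One small point of rigor worth flagging: Lemma \ref{Lshift} is stated on $H^1_{[0]}(-b,T;\mathbb{R}^n)\times[0,b]$, and your evaluation of $D_\tau F$ applies it to the shift of $x[\tau]=z[\tau]+\phi$ although $\phi\notin H^1_{[0]}(-b,T;\mathbb{R}^n)$; this is harmless, since the vanishing history is used in the lemma's proof only to bound the increment term $R_h$ and $\phi$ carries no increment, and the paper makes the same silent extension when asserting that $G$ is of class $C^1$.
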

\begin{proof} Thanks to our transformation, we have $x[\tau] = z[\tau]+\phi$.
Therefore, the differentiability properties of $\tau \mapsto z[\tau]$ transfer to $\tau \mapsto x[\tau]$ and we have
$x'[\tau]=z'[\tau]$. The equation for $x'[\tau]$ can be determined by implicit differentiation; $x[\tau]$ obeys
\[
\begin{array}{lcl}
\displaystyle x[\tau](t) = \varphi(0) + \int_0^t \big\{-f(x[\tau](s))+ A x[\tau](s-\tau) + g(s)\big\}\, ds,& t \in (0,T],\\[1ex]
x[\tau](t) =0, & t \in [-b,0].
\end{array}
\]
Theorem \ref{differentiability} justifies to differentiate both equations with respect to $\tau$, hence
\[
\begin{aligned}
& x'[\tau](t)=\displaystyle \int_0^t \big\{-(Df(x[\tau])x'[\tau])(s)
+ A x'[\tau](s-\tau) - A \dot x[\tau](s-\tau)\big\}\,ds, \, t \in (0,T],\\[1ex]
&x'[\tau](t) =0 ,  \, t \in [-b,0].
\end{aligned}
\]
In view of Theorem \ref{differentiability}, the function  $x'[\tau]$ belongs to $\raumHeins$. We can differentiate
the first equation w.r. to $t$ and obtain the claimed result of the corollary.

To verify \eqref{eq:k11}, note that 
\begin{equation}\label{eq:k12}
\partial_t x[\tau](t) = -f(x[\tau](t)) - Ax[\tau](t-\tau) +g(t).
\end{equation}
The right hand side is differentiable with respect to $\tau$ and belongs to $L^2(0,T;\mathbb{R}^n)$. Hence $\partial_\tau x[\tau](\cdot)$ exists as an element in  $L^2(0,T;\mathbb{R}^n)$. Finally \eqref{eq:k11} follows by taking the derivative with respect to $\tau$ in \eqref{eq:k12} and comparing with \eqref{equation_for_x'}.
\end{proof}

\section{Optimization of the time delay} \label{S4}

In this section, we apply the theory of the previous sections to the optimization problem 
\begin{equation} \label{OP}
\min_{\tau \in [0,b]} j(\tau) : = \frac{1}{2} \int_0^T \big|x[\tau](t) - x_d(t)\big|^2 \, dt,
\end{equation}
where $x_d \in L^2(0,T;\mathbb{R}^n)$ is a given desired state and $x[\tau]$ denotes the solution of 
\eqref{delayeq} for given $\tau$. 

We discuss the first- and second-order sensitivity of the cost function $j$ and derive first- and second-order optimality conditions. 
The second-order sensitivity analysis of  $j$ is performed in two ways. In the first, we
use the second-order derivative $x''[\tau]$, in the second we invoke an adjoint calculus that does not exploit the derivative $x''[\tau]$.

\subsection{First-order sensitivity analysis} \label{S51}

We first assume $\varphi \in H^1(-b,0;\mathbb{R}^n)$; then equation \eqref{delayeq} admits a unique
solution $x[\tau] \in \raumHeins$. If $g$ belongs to $H^1(0,T;\mathbb{R}^n)$, then we have 
$x \in H^2(0,T;\mathbb{R}^n)$.

Associated to $x[\tau]$, we define the adjoint equation
\begin{equation} \label{E1}
\left\{
\begin{aligned}
&-\dot p(t) + Df(x[\tau](t))^\top p(t)= A^\top p(t +\tau) + x[\tau](t) - x_d(t), \quad t \in [0,T),\\[1ex]
&\quad p(t) = 0, \quad t \in [T,T+b].
\end{aligned}
\right.
\end{equation}
This equation admits a unique solution $p \in H^1(0,T+b;\mathbb{R}^n)$, denoted by $p[\tau]$.
For the sake of brevity, we sometimes omit the dependence on $\tau$. Concerning the differentiability
of $p[\tau]$ with respect to $\tau$, we have the following result analogously to Corollary \ref{Cor_x'}:
\begin{proposition} \label{P1} If $f\in C^2(\mathbb{R}^n,\mathbb{R}^n)$, the mapping $\tau \mapsto p[\tau]$ is continuously  differentiable from $[0,b)$
to $H^1(0,T+b;\mathbb{R}^n)$. Its derivative $w=p'[\tau]$ is the unique solution of 
\begin{equation} \label{E2}
\left\{
\begin{aligned}
&-\dot w(t) + Df(x[\tau](t))^\top w(t) - A^\top w(t +\tau) =  \\[1ex]
&  \qquad -x'[\tau] D(Df(x[\tau](t))^\top) p[\tau](t)    + A^\top \dot p[\tau](t+\tau) + x'[\tau](t), \quad t \in [0,T],\\[1ex]
&\quad p(t) = 0, \quad t \in [T,T+b],
\end{aligned}
\right.
\end{equation}
where 
\[
(q D(Df(x[\tau](t))^\top) p)_i = \sum_{j,k =1}^n (f_j)_{x_i x_k} p_j q_k.
\]
\end{proposition}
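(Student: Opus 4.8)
The plan is to follow the implicit-function-theorem strategy of Theorem \ref{differentiability} and Corollary \ref{Cor_x'}. The one genuinely new feature is that the adjoint equation \eqref{E1} is a \emph{backward} equation carrying an advanced argument $p(t+\tau)$ rather than a delayed one. I would remove this difficulty first by the time reversal $s := T-t$, setting $q[\tau](s) := p[\tau](T-s)$. Since $p[\tau] = 0$ on $[T,T+b]$, the function $q[\tau]$ vanishes on $[-b,0]$, and a short computation turns \eqref{E1} into the forward delay equation
\[
\dot q(s) + Df(x[\tau](T-s))^\top q(s) = A^\top q(s-\tau) + (x[\tau]-x_d)(T-s), \quad s \in (0,T],
\]
together with $q(s)=0$ on $[-b,0]$. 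This has exactly the structure of \eqref{delayeq} (linear in $q$, genuine delay $\tau$), so that the function-space analysis of Sections \ref{S2} and \ref{S3} applies after reversal, and $q[\tau] \in \raumH$.

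Next I would recast this equation as $\mathcal{G}(q,\tau)=0$ for a map $\mathcal{G}\colon \raumH \times [0,b) \to \raumH$ built, as in the Notation block preceding Theorem \ref{differentiability}, from the integrated right-hand side. Two ingredients are needed for $\mathcal{G}$ to be of class $C^1$: first, the shift $(q,\tau)\mapsto q(\cdot-\tau)$ is $C^1$ by Lemma \ref{Lshift}; second, the coefficient $\tau \mapsto Df(x[\tau](T-\cdot))^\top$ and the forcing $\tau \mapsto (x[\tau]-x_d)(T-\cdot)$ are $C^1$. This second point is where the hypothesis $f\in C^2$ enters: by Corollary \ref{Cor_x'} the map $\tau\mapsto x[\tau]$ is $C^1$ into $\raumHeins \hookrightarrow C([-b,T];\mathbb{R}^n)$, and since $Df$ is then $C^1$, the superposition $\tau \mapsto Df(x[\tau](\cdot))^\top$ is $C^1$ into $C([0,T];\mathbb{R}^{n\times n})$ by the chain rule. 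To invoke the implicit function theorem it remains to check that $D_q\mathcal{G}(q,\tau)$ is an isomorphism; the associated linearized equation is precisely a linear delay equation of the type solved stepwise in time in the proof of Theorem \ref{differentiability}, hence uniquely solvable, so $D_q\mathcal{G}$ is invertible. The implicit function theorem then yields that $\tau \mapsto q[\tau]$, and therefore $\tau\mapsto p[\tau]$, is $C^1$ from $[0,b)$ into $H^1(0,T+b;\mathbb{R}^n)$.

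It then remains to identify the derivative $w=p'[\tau]$. I would differentiate the integrated form of \eqref{E1} with respect to $\tau$, tracking the three sources of $\tau$-dependence. Differentiating the coefficient term gives $\partial_\tau\big(Df(x[\tau])^\top\big)p[\tau] = x'[\tau]\,D(Df(x[\tau])^\top)\,p[\tau]$, which is exactly the second-derivative term on the right of \eqref{E2} and accounts for the index formula stated there (it arises from $D^2 f$, hence the need for $f \in C^2$); the forcing contributes $x'[\tau]$; and the advanced term must be differentiated as a composition, $\partial_\tau\big(p[\tau](t+\tau)\big) = w(t+\tau) + \dot p[\tau](t+\tau)$, the second summand producing the term $A^\top \dot p[\tau](t+\tau)$ in \eqref{E2}. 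Collecting the $w$-terms on the left reproduces \eqref{E2}; uniqueness of $w$ follows since \eqref{E2} is again a linear backward delay equation of the same type as \eqref{E1}.

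The main obstacle is the bookkeeping of the multiple $\tau$-dependencies, and in particular making the differentiation of the advanced composition $p[\tau](t+\tau)$ rigorous: the shift acts on the $\tau$-dependent argument of the $\tau$-dependent function $p[\tau]$, so the product-rule step producing $\dot p[\tau](t+\tau)$ must be justified in $L^2(0,T;\mathbb{R}^n)$ exactly as the analogous term $-A\dot x[\tau](t-\tau)$ was handled via Lemma \ref{Lshift} in Corollary \ref{Cor_x'} (with the sign of the $\dot p$-contribution flipped because the shift is an advance). The remaining delicate point, comparatively routine here since \eqref{E1} is linear in $p$, is confirming the regularity $p'[\tau]\in H^1(0,T+b;\mathbb{R}^n)$, which follows from the solvability theory for \eqref{E2} once that equation is in hand.
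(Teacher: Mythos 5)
Your proposal is correct and follows essentially the same route as the paper, whose proof is only a two-sentence sketch: adapt Corollary \ref{Cor_x'} (i.e.\ the implicit-function-theorem argument of Theorem \ref{differentiability} together with Lemma \ref{Lshift}), reduce the backward equation to a forward one by time reversal, and account for the $\tau$-dependence of the right-hand side $x[\tau]-x_d$ --- precisely the three points you carry out, including the correct sign flip $\partial_\tau\big(p[\tau](t+\tau)\big)=w(t+\tau)+\dot p[\tau](t+\tau)$ for the advanced argument and the use of $f\in C^2$ for the $C^1$ dependence of the coefficient $Df(x[\tau])^\top$ on $\tau$. In effect you have filled in the details the paper leaves implicit, with no gaps.
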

The proof is similar to that of Corollary \ref{Cor_x'} with two differences: Now, we have a backward equation. This can be reduced to a forward equation by a standard transformation of time. Moreover, 
in  Corollary \ref{Cor_x'} the right-hand side $g$ did not depend on $\tau$. Here, the right-hand side is $x[\tau]-x_d$.

The first derivative of the cost $j$ is characterized next. Here and in what follows, $\langle \cdot , \cdot \rangle$ denotes the standard inner product of $\mathbb{R}^n$.

\begin{proposition} \label{P2}
If $f$ is continuously differentiable, $\varphi \in H^1(-b,0;\mathbb{R}^n)$, and $g \in L^2(0,T;\mathbb{R}^n)$, then $j \in C^1[0,b]$ and 
\begin{equation} \label{Dj*}
j'(\tau) = - \int_0^T \langle p[\tau], A \dot x[\tau](t-\tau)\rangle dt.
\end{equation}
\end{proposition}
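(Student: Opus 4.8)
The plan is to combine the chain rule for the composite map $\tau \mapsto x[\tau] \mapsto j(\tau)$ with an integration-by-parts argument that plays the state-sensitivity equation \eqref{equation_for_x'} off against the adjoint equation \eqref{E1}.

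First I would establish $j \in C^1$ together with the ``primal'' form of its derivative. By Corollary \ref{Cor_x'} the map $\tau \mapsto x[\tau]$ is continuously differentiable from $[0,b)$ into $H^1(-b,T;\mathbb{R}^n)$, which embeds continuously into $C([-b,T],\mathbb{R}^n)$ and hence into $L^2(0,T;\mathbb{R}^n)$. Since $x \mapsto \tfrac12\int_0^T|x-x_d|^2\,dt$ is a continuous quadratic functional on $L^2(0,T;\mathbb{R}^n)$, it is of class $C^1$, so the chain rule gives $j \in C^1$ (the endpoint $\tau=b$ being treated via the one-sided derivative, exactly as in the preceding results) with
\[
j'(\tau) = \int_0^T \langle x[\tau](t) - x_d(t),\, w[\tau](t)\rangle\,dt,
\]
where $w=w[\tau]=x'[\tau]$ solves \eqref{equation_for_x'}.

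Second, I would eliminate the factor $x[\tau]-x_d$ using the adjoint equation \eqref{E1}, which reads $x[\tau]-x_d = -\dot p + Df(x[\tau])^\top p - A^\top p(\cdot+\tau)$ on $[0,T)$. Substituting this splits $j'(\tau)$ into three integrals. In the first I integrate by parts in $t$: the boundary contribution $\langle p(T),w(T)\rangle - \langle p(0),w(0)\rangle$ vanishes because $p(T)=0$ (terminal condition in \eqref{E1}) and $w(0)=0$ (initial condition in \eqref{equation_for_x'}), leaving $\int_0^T\langle p,\dot w\rangle\,dt$. In the second I transpose $Df(x)^\top$ to obtain $\int_0^T\langle p,Df(x)\,w\rangle\,dt$. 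In the third I substitute $s=t+\tau$ and transpose $A^\top$; since $p(s)=0$ for $s\ge T$ and $w(s-\tau)=0$ for $s\le\tau$ (as $w$ vanishes on $[-b,0]$), the shifted integral collapses back to $\int_0^T\langle p(t),A\,w(t-\tau)\rangle\,dt$ with no spurious contributions. Collecting the three pieces yields
\[
j'(\tau) = \int_0^T \big\langle p(t),\, \dot w(t) + Df(x[\tau](t))\,w(t) - A\,w(t-\tau)\big\rangle\,dt,
\]
and the second factor of the integrand is precisely the left-hand side of \eqref{equation_for_x'}, equal to $-A\dot x[\tau](t-\tau)$. This gives the asserted identity $j'(\tau) = -\int_0^T\langle p[\tau],A\dot x[\tau](t-\tau)\rangle\,dt$.

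I expect the main obstacle to be the careful bookkeeping in this second step rather than any deep difficulty: one must check that the regularity $p\in H^1(0,T+b;\mathbb{R}^n)$ and $w\in H^1(-b,T;\mathbb{R}^n)$ legitimizes the pointwise boundary evaluations in the integration by parts, and one must correctly exploit the vanishing of $p$ past $T$ and of $w$ before $0$ so that the change of variables in the delayed adjoint term introduces no extra boundary or interval contributions. The $C^1$-regularity and the chain rule in the first step, by contrast, are routine consequences of Corollary \ref{Cor_x'} and the smoothness of the quadratic tracking functional.
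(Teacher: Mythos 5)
Your proposal is correct and follows essentially the same route as the paper: the chain rule via Corollary \ref{Cor_x'} to get $j'(\tau)=\int_0^T\langle x[\tau]-x_d,\,x'[\tau]\rangle\,dt$, substitution of the adjoint equation \eqref{E1}, and transposition (integration by parts in $t$ plus the shift $s=t+\tau$, with the boundary terms killed by $p(T)=0$, $w=0$ on $[-b,0]$, and $p=0$ on $[T,T+b]$) to recognize the left-hand side of \eqref{equation_for_x'} and conclude. If anything, you spell out the bookkeeping of the shifted adjoint term and the boundary evaluations more explicitly than the paper, which compresses these steps into a single four-line computation.
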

\begin{proof} We compute, not indicating the dependence of $p[\tau]$ on $\tau$,
\begin{align*}
& j'(\tau) = \int_0^T \langle x[\tau](t)-x_d(t), x'[\tau](t)\rangle dt\\
&= \int_0^T  \langle -\dot p(t) + Df(x[\tau](t))^\top p(t)- A^\top p(t +\tau) , x'[\tau](t)\rangle dt\\
&= \int_0^T  \langle p(t), \dot x'[\tau](t) + Df(x[\tau](t)) x'[\tau](t) - A x'[\tau](t -\tau) \rangle dt\\
&= - \int_0^T  \langle p(t), A \dot x[\tau](t -\tau) \rangle dt.
\end{align*}
\end{proof}
\subsection{Second-order sensitivity analysis for $j$} \label{S52}
In this section we verify that under additional assumptions on the problem data $f, \varphi$ and $g$, the cost functional is twice continuously differentiable. This allows us to formulate a second-order sufficient optimality condition for \eqref{OP}. 
We will rely on the following 
\begin{assumption} \label{A3} The function $f$ belongs to $C^2(\mathbb{R}^n,\mathbb{R}^n)$, $\varphi$ to 
$H^2(-b,0;\mathbb{R}^n)$, and $g$ to $H^1(0,T;\mathbb{R}^n)$.
\end{assumption}
\begin{proposition} \label{P3}
If Assumption \ref{A3} holds,
then $j \in C^2[0,b]$ and 
\begin{equation}\label{D2j}
\begin{aligned}
&\displaystyle j''(\tau) =\displaystyle \int_0^T \big|x'[\tau]\big|^2\,dt -  \int_0^T \langle p[\tau](t) ,D^2f(x[\tau])(x'[\tau],x'[\tau])\rangle dt\\[1ex]
&\qquad \displaystyle -2 \int_\tau^T \langle p[\tau](t), A \dot x'[\tau](t-\tau)\rangle dt + \langle p[\tau](\tau), A\big(\dot x[\tau](0^+)-\dot \varphi(0)\big)\rangle\\[1ex]
&\qquad \displaystyle +\int_0^\tau \langle p[\tau](t), A \ddot \varphi(t-\tau)\rangle dt + \int_\tau^T\langle p[\tau](t),A \ddot x[\tau](t-\tau)\rangle dt.
\end{aligned}
\end{equation}
\end{proposition}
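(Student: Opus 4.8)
The plan is to start from the adjoint representation of the first derivative established in Proposition \ref{P2}, namely
\[
j'(\tau) = -\int_0^T \langle p[\tau](t), A\dot x[\tau](t-\tau)\rangle\, dt,
\]
see \eqref{Dj*}, and to differentiate it once more in $\tau$. Before differentiating I would split the integral at $t=\tau$, writing it as $-\int_0^\tau\langle p, A\dot\varphi(t-\tau)\rangle\,dt - \int_\tau^T\langle p, A\dot x[\tau](t-\tau)\rangle\,dt$, since on $[0,\tau]$ the shifted argument $t-\tau$ lies in $[-b,0]$, where $x[\tau]=\varphi$ does not depend on $\tau$, whereas on $[\tau,T]$ it lies in the region where $x[\tau]$ solves \eqref{delayeq}. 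This splitting is the crucial preparatory step: it isolates the discontinuity of $\dot x[\tau]$ at $0$ into the two moving endpoints and keeps both interior integrands in $L^2$.

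Next I would differentiate each piece by the Leibniz rule. The $\tau$-derivatives inside the integrals produce the adjoint sensitivity $p'[\tau]$ from Proposition \ref{P1} together with the chain-rule contributions $\tfrac{d}{d\tau}\dot\varphi(t-\tau)=-\ddot\varphi(t-\tau)$ and, on $[\tau,T]$, $\tfrac{d}{d\tau}\dot x[\tau](t-\tau)=\dot x'[\tau](t-\tau)-\ddot x[\tau](t-\tau)$, where the commutation $\partial_t\partial_\tau=\partial_\tau\partial_t$ from \eqref{eq:k11} enters. The two moving endpoints at $t=\tau$ evaluate the integrands at shifted argument $0$, from the left through $\dot\varphi(0)$ and from the right through $\dot x[\tau](0^+)$; these combine into exactly the boundary term $\langle p[\tau](\tau), A(\dot x[\tau](0^+)-\dot\varphi(0))\rangle$ appearing in \eqref{D2j}. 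After this step all terms of \eqref{D2j} involving $\ddot\varphi$, $\ddot x$, and one copy of $\dot x'$ are already present, and only the single integral $-\int_0^T\langle p'[\tau], A\dot x[\tau](t-\tau)\rangle\,dt$ (the two pieces recombine because $\dot x[\tau]=\dot\varphi$ on $[0,\tau]$) must still be rewritten so that $p'[\tau]$ disappears.

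To eliminate $p'[\tau]$ I would use the duality between the linearized state and adjoint equations. Replacing $-A\dot x[\tau](t-\tau)$ by $\dot x'[\tau]+Df(x[\tau])x'[\tau]-Ax'[\tau](\cdot-\tau)$ via \eqref{equation_for_x'}, integrating by parts in $t$ (the endpoint terms vanish because $p'[\tau](T)=0$ and $x'[\tau](0)=0$), and shifting the delayed term turns this integral into $\int_0^T\langle -\dot p'+Df(x)^\top p'-A^\top p'(\cdot+\tau),\,x'\rangle\,dt$. Substituting the equation \eqref{E2} for the bracketed expression, i.e. $-x'D(Df(x)^\top)p+A^\top\dot p(\cdot+\tau)+x'$, yields the term $\int_0^T|x'[\tau]|^2\,dt$, the term $-\int_0^T\langle p[\tau], D^2f(x[\tau])(x'[\tau],x'[\tau])\rangle\,dt$ after invoking the index identity for $D(Df(x)^\top)$ and the symmetry of second derivatives, and a remaining integral $\int_\tau^T\langle\dot p, A x'(\cdot-\tau)\rangle\,dt$; a final integration by parts of the latter, with endpoints again vanishing, produces the second copy of $-\int_\tau^T\langle p[\tau], A\dot x'[\tau](\cdot-\tau)\rangle\,dt$ and thus the factor $2$. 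Collecting all contributions reproduces \eqref{D2j}.

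The step I expect to be the main obstacle is the rigorous bookkeeping of the jump of $\dot x[\tau]$ at $t=0$ together with the justification of the Leibniz differentiation. One must verify that, under Assumption \ref{A3}, $x[\tau]\in H^2(0,T;\mathbb{R}^n)$ and $x'[\tau]\in H^1(-b,T;\mathbb{R}^n)$, so that $\ddot x[\tau]$, $\dot x'[\tau]$, and $\ddot\varphi$ are genuine $L^2$ functions on the interior and the $\tau$-parametrized integrands are differentiable in the appropriate topology, while the single distributional jump of $\dot x[\tau]$ at $0$ is accounted for \emph{only} through the two endpoint contributions at $t=\tau$. Conflating the interior derivative with this jump would either double-count it or lose it, and the boundary term $\langle p[\tau](\tau), A(\dot x[\tau](0^+)-\dot\varphi(0))\rangle$ is precisely its correct trace.
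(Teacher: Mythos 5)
Your proposal is correct and follows essentially the same route as the paper's proof: split \eqref{Dj*} at $t=\tau$, differentiate by the Leibniz rule (the moving endpoints producing the jump term $\langle p[\tau](\tau), A(\dot x[\tau](0^+)-\dot \varphi(0))\rangle$), and then eliminate $p'[\tau]$ by duality between \eqref{equation_for_x'} and \eqref{E2} with integration by parts and a shift of the delayed terms. The only cosmetic difference is that the paper creates the factor $-2$ by adding and subtracting $\int_0^T\langle p[\tau](t),A\dot x'[\tau](t-\tau)\rangle\,dt$ so that the shifted terms cancel as a total derivative, whereas you generate the second copy of $-\int_\tau^T\langle p[\tau](t),A\dot x'[\tau](t-\tau)\rangle\,dt$ by one further integration by parts of $\int_0^T\langle\dot p[\tau](t+\tau),Ax'[\tau](t)\rangle\,dt$ --- the same identity, merely rearranged.
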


\begin{proof}
For the second derivative, we obtain
\begin{align*}
& j''(\tau) = \frac{d}{d\tau}\left[  
-\int_0^\tau \langle p[\tau](t), A \dot x[\tau](t -\tau) \rangle dt -\int_\tau^T \langle p[\tau](t), A \dot x[\tau](t -\tau) \rangle dt\right]\\
&=-\int_0^T \langle p'[\tau](t), A \dot x[\tau](t -\tau) \rangle dt - \int_0^T \langle p[\tau](t), A \dot x'[\tau](t -\tau) \rangle dt\\
&+ \langle p[\tau](\tau),A (\dot x[\tau](0^+) - \dot \varphi(0))\rangle + \int_0^\tau \langle p[\tau](t), A \ddot \varphi(t -\tau) \rangle dt \\
&+ \int_\tau^T \langle p[\tau](t), A \ddot x[\tau](t -\tau) \rangle dt \\
\end{align*}
\begin{align*}
&=-\int_0^T \langle p'[\tau](t), A \dot x[\tau](t -\tau) \rangle dt + \int_0^T \langle p[\tau](t), A \dot x'[\tau](t -\tau) \rangle dt\\
&-2  \int_0^T \langle p[\tau](t), A \dot x'[\tau](t -\tau) \rangle dt + \langle p[\tau](\tau),A (\dot x[\tau](0^+) - \dot \varphi(0))\rangle\\
& + \int_0^\tau \langle p[\tau](t), A \ddot \varphi(t -\tau) \rangle dt + \int_\tau^T \langle p[\tau](t), A \ddot x[\tau](t -\tau) \rangle dt.
\end{align*}
Let us turn to the first two terms on the right-hand side of the last expression:
\begin{align*}
&-\int_0^T \langle p'[\tau](t), A \dot x[\tau](t -\tau) \rangle dt + \int_0^T \langle p[\tau](t), A \dot x'[\tau](t -\tau) \rangle dt\\
&=\int_0^T \langle p'[\tau](t), \dot x'[\tau](t) + Df(x[\tau](t)) x'[\tau](t) - A x'[\tau](t -\tau) 
\rangle dt\\
&\hspace{2cm}+\int_0^T \langle p[\tau](t), A \dot x'[\tau](t -\tau) \rangle dt \\
&= \int_0^T \langle -\dot p'[\tau](t)  + Df(x[\tau](t))^\top p'[\tau](t) - A^\top p'[\tau](t +\tau) ,x'[\tau](t)\rangle dt\\
&\hspace{2cm}+\int_0^T \langle p[\tau](t+\tau), A \dot x'[\tau](t) \rangle dt \\
&=\int_0^T\langle \dot p[\tau](t+\tau), A x'[\tau](t)\rangle dt + \int_0^T \big|x'[\tau](t)\big|^2 dt +\int_0^T\langle p[\tau](t+\tau), A \dot x'[\tau](t)\rangle dt \\
&- \int_0^T \langle x'[\tau](t) D(D f(x[\tau](t))^\top p[\tau](t) , x'[\tau](t)\rangle dt\\
&= \int_0^T \big|x'[\tau](t)\big|^2 dt - \int_0^T \langle x'[\tau](t) D(D f(x[\tau](t))^\top p[\tau](t) , x'[\tau](t)\rangle dt,
\end{align*}
where we used that 
the action of the tensor $D^2f(x)$ is given by
$$D^2f(x)(h_1,h_2)= col_k \sum_{i,j=1}^n h_1^\top D^2f_k(x) \, h_2 , \text{ for } h_1\in \mathbb{R}^n, h_2\in \mathbb{R}^n,$$
and
\[
\langle D^2f(x)(v, v) ,p \rangle = \langle v D(D f(x)^\top) p, v \rangle \quad \forall \,v,\, p \in \mathbb{R}^n.
\]
\end{proof}
\begin{corollary}If the compatibility condition \eqref{compat} is satisfied, then
\begin{equation}
\begin{aligned}
& j''(\tau) =  \int_0^T \big|x'[\tau](t)\big|^2 dt - \int_0^T \langle p[\tau](t),(D^2f(x[\tau])(x'[\tau],x'[\tau]))(t)\rangle dt\\
&\qquad \quad \ -2 \int_0^T\langle p[\tau](t), A \dot x'[\tau](t-\tau)\rangle dt + \int_0^T\langle p[\tau](t), A \ddot x[\tau](t-\tau)\rangle dt.
\end{aligned}
\end{equation}
\end{corollary}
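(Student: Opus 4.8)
The plan is to start from the expression for $j''(\tau)$ established in Proposition \ref{P3} and show that, under the compatibility condition \eqref{compat}, one boundary term drops out and the remaining integrals over $[0,\tau]$ and $[\tau,T]$ coalesce into integrals over the full interval $[0,T]$. The argument is essentially bookkeeping: it amounts to identifying the state $x[\tau]$, its $\tau$-derivative $x'[\tau]$, and the second time derivatives on the interval $[-b,0]$, and requires no new estimates.

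First I would record what \eqref{compat} buys us. By the computation carried out in the Remark following Theorem \ref{well-posedness}, one has $\dot x[\tau](0^+) = -f(\varphi(0)) + A\varphi(-\tau) + g(0)$, so \eqref{compat} is precisely the statement $\dot x[\tau](0^+) = \dot\varphi(0)$. Consequently the boundary term $\langle p[\tau](\tau), A(\dot x[\tau](0^+) - \dot\varphi(0))\rangle$ appearing in \eqref{D2j} vanishes. At the same time, \eqref{compat} guarantees (again by the Remark, under Assumption \ref{A3}) that $x[\tau] \in H^2(-b,T;\mathbb{R}^n)$, so that $\ddot x[\tau]$ is a genuine element of $L^2(-b,T;\mathbb{R}^n)$ and the shifted integrand $\ddot x[\tau](\cdot-\tau)$ is well defined on all of $[0,T]$.

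Next I would exploit the behaviour on $[-b,0]$. Since $x[\tau]|_{[-b,0]} = \varphi$ independently of $\tau$, differentiation in $\tau$ gives $x'[\tau](s) = 0$, hence $\dot x'[\tau](s) = 0$, for $s \in [-b,0]$, while $\ddot x[\tau](s) = \ddot\varphi(s)$ there. For $t \in [0,\tau]$ the shifted argument satisfies $t-\tau \in [-\tau,0] \subseteq [-b,0]$, so two conclusions follow. The integrand $\langle p[\tau](t), A\dot x'[\tau](t-\tau)\rangle$ vanishes for $t\in[0,\tau]$, which allows $-2\int_\tau^T$ to be replaced by $-2\int_0^T$; and $\ddot x[\tau](t-\tau) = \ddot\varphi(t-\tau)$ for $t\in[0,\tau]$, so that $\int_0^\tau\langle p[\tau](t), A\ddot\varphi(t-\tau)\rangle\,dt$ is exactly the missing $[0,\tau]$-piece of $\int_0^T\langle p[\tau](t), A\ddot x[\tau](t-\tau)\rangle\,dt$. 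Combining the last two integrals in \eqref{D2j} then produces the single integral $\int_0^T\langle p[\tau](t), A\ddot x[\tau](t-\tau)\rangle\,dt$. Substituting these observations into \eqref{D2j} yields the claimed formula.

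The only step requiring genuine care — and the reason the compatibility hypothesis is indispensable — is the regularity assertion. Without \eqref{compat}, the derivative $\dot x[\tau]$ jumps at $t=0$, the distributional second derivative $\ddot x[\tau]$ carries a Dirac mass there, and the expression $\int_0^T\langle p[\tau](t), A\ddot x[\tau](t-\tau)\rangle\,dt$ ceases to make sense as an ordinary integral. The boundary term $\langle p[\tau](\tau), A(\dot x[\tau](0^+)-\dot\varphi(0))\rangle$ in Proposition \ref{P3} is precisely the contribution of that jump, and it is its vanishing under \eqref{compat} that makes the clean reformulation legitimate.
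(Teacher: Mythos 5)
Your proposal is correct and matches the paper's intended argument: the corollary is left unproved in the paper precisely because it follows from formula \eqref{D2j} of Proposition \ref{P3} by exactly the bookkeeping you describe --- the compatibility condition \eqref{compat} kills the boundary term $\langle p[\tau](\tau), A(\dot x[\tau](0^+)-\dot\varphi(0))\rangle$, while $x'[\tau]\equiv 0$ and $x[\tau]=\varphi$ on $[-b,0]$ let the integrals over $[0,\tau]$ and $[\tau,T]$ merge into integrals over $[0,T]$. Your closing observation that \eqref{compat} restores $H^2(-b,T;\mathbb{R}^n)$-regularity of $x[\tau]$, so that $\ddot x[\tau](\cdot-\tau)$ is a genuine $L^2$ integrand rather than carrying a Dirac mass at $t=\tau$, correctly identifies why the hypothesis is needed and is consistent with the paper's discussion around \eqref{jump*}.
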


\subsection{Existence for \eqref{OP} and first/second-order optimality } \label{S4.5}
With the results of the previous sections, the analysis of \eqref{OP} is now completely standard. We summarize it in the following theorem.

\begin{theorem}\label {theo4.1} With Assumption \ref{A1} holding there exists a solution $\bar \tau$ of \eqref{OP}, satisfying the first-order condition $j'(\bar \tau)(\tau - \bar \tau) \ge 0$ for all $\tau \in [0,b]$. If moreover the regularity assumptions of Proposition \ref{P3} hold, then  each of the following conditions is sufficient for $\hat \tau$ to be a strict local minimizer of $j$: 

(i) $0 < \hat \tau < b$, $j'(\hat \tau)=0$, and $j''(\hat \tau)>0$,

(ii) $\hat \tau=0$ and $j'(0)>0$ or $\hat \tau = b$ and $j'(b)>0$.
\end{theorem}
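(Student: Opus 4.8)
The plan is to treat \eqref{OP} as a one-dimensional optimization problem over the compact interval $[0,b]$ and to reduce every claim to the differentiability properties of $j$ established in Propositions \ref{P2} and \ref{P3}. Thus almost all of the analytic work has already been carried out in the preceding subsections, and the statement reduces to elementary facts about smooth functions of one real variable on a closed interval.

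First I would establish existence together with the first-order condition. Under Assumption \ref{A1}, Proposition \ref{P2} guarantees $j \in C^1[0,b]$; in particular $j$ is continuous on the compact set $[0,b]$, so by the Weierstrass theorem it attains its minimum at some $\bar\tau \in [0,b]$. For the variational inequality, I fix any $\tau \in [0,b]$ and consider $s \mapsto j(\bar\tau + s(\tau-\bar\tau))$ on $[0,1]$, which is admissible because $[0,b]$ is convex. This scalar function attains its minimum at $s=0$, so its one-sided derivative there is nonnegative; by the chain rule this derivative equals $j'(\bar\tau)(\tau-\bar\tau)$, giving $j'(\bar\tau)(\tau-\bar\tau)\ge 0$ for all $\tau\in[0,b]$.

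For the sufficient conditions I would invoke $j \in C^2[0,b]$, which holds under the regularity assumptions of Proposition \ref{P3}, and argue by Taylor expansion. In case (i), $\hat\tau$ is interior and $j'(\hat\tau)=0$, so
\[
j(\tau) - j(\hat\tau) = \tfrac12 j''(\hat\tau)(\tau-\hat\tau)^2 + o\big(|\tau-\hat\tau|^2\big);
\]
since $j''(\hat\tau)>0$ and $j''$ is continuous, the right-hand side is strictly positive for $\tau$ in a deleted neighbourhood of $\hat\tau$, so $\hat\tau$ is a strict local minimizer. In case (ii) the minimizer lies at an endpoint, where only one-sided feasible directions are available. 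At $\hat\tau=0$ the admissible increments are $\tau-0=\tau>0$ and $j(\tau)-j(0)=j'(0)\,\tau + o(\tau)$, so the sign of $j'(0)$ decides whether moving into the interior raises $j$; the analogous first-order Taylor argument applies at $\hat\tau=b$ with increments $\tau-b<0$. In each endpoint case the hypothesis on the first derivative forces $j(\tau)>j(\hat\tau)$ for feasible $\tau$ near $\hat\tau$, hence a strict local minimizer.

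I do not expect a genuine obstacle here: the theorem is standard once $j$ is known to be $C^1$ (for existence and the necessary condition) and $C^2$ (for the sufficient conditions), and these are precisely the contents of Propositions \ref{P2} and \ref{P3}. The only points demanding care are (a) verifying that existence needs no more than continuity of $j$, so that Assumption \ref{A1} alone suffices via Proposition \ref{P2}, and (b) correctly bookkeeping the one-sided feasible directions at the two endpoints, where the sign of $j'$ must be such that every feasible displacement is an ascent direction.
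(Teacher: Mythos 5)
Your overall strategy is exactly what the paper intends: the paper gives no proof at all, stating only that ``the analysis of \eqref{OP} is now completely standard,'' and your reduction to Weierstrass existence on the compact interval $[0,b]$ plus one-dimensional Taylor arguments, with $j\in C^1$ from Proposition \ref{P2} and $j\in C^2$ from Proposition \ref{P3}, is precisely that standard analysis. The existence part and case (i) are fine as you present them.

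There is, however, a genuine slip at the right endpoint, and it is one your own method should have caught. At $\hat\tau=b$ the feasible increments satisfy $\tau-b<0$, so the expansion
\begin{equation*}
j(\tau)-j(b)=j'(b)\,(\tau-b)+o\bigl(|\tau-b|\bigr)
\end{equation*}
is \emph{strictly negative} for $\tau<b$ close to $b$ when $j'(b)>0$; under the hypothesis as stated, $b$ is a strict local \emph{maximizer} in the feasible direction, not a minimizer. The correct sufficient condition at the right endpoint is $j'(b)<0$, consistent with the necessary variational inequality $j'(\bar\tau)(\tau-\bar\tau)\ge 0$, which at $\bar\tau=b$ forces $j'(b)\le 0$. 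The theorem's ``$j'(b)>0$'' is evidently a sign typo in the paper, but your sentence asserting that ``in each endpoint case the hypothesis on the first derivative forces $j(\tau)>j(\hat\tau)$'' is false as written: carried out honestly, your Taylor argument at $b$ contradicts the stated hypothesis rather than confirming it, and a blind proof should either have derived the correct sign or flagged the inconsistency. Two minor points besides: case (ii) needs only $j\in C^1$ (Proposition \ref{P2}), so invoking the $C^2$ regularity of Proposition \ref{P3} there is superfluous; and for the necessary condition you can dispense with the chain-rule detour through $s\mapsto j(\bar\tau+s(\tau-\bar\tau))$, since in one dimension the one-sided derivatives of $j$ at $\bar\tau$ directly give $j'(\bar\tau)\ge 0$ if $\bar\tau<b$ can be approached from the right and $j'(\bar\tau)\le 0$ if from the left, which is the same inequality.
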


\section{Second-order derivative of the state} \label{S5}

In this section, we discuss the second-order derivative of the mapping $\tau \mapsto x[\tau]$ for the equation \eqref{delayeq}. We prove the existence of $x''[\tau]$ in $L^2(0,T;\mathbb{R}^n)$ and establish equations for it.  This  allows us to obtain an alternative expression for the second derivative of the cost:
\begin{equation} \label{j''}
\begin{aligned}
j''(\tau) &= \frac{d}{d\tau} j'[\tau] =  \frac{d}{d\tau}\int_0^T \langle x[\tau](t)-x_d(t), x'[\tau](t)\rangle dt\\[1ex]
&=\int_0^T \big|x'[\tau](t)\big|^2dt + \int_0^T \langle x[\tau](t)-x_d(t),x''[\tau](t)\rangle dt.
\end{aligned}
\end{equation}
This requires some attention since  $t \mapsto x''[\tau](t)$ is not differentiable
at $t=\tau$ unless the compatibility condition \eqref{compat} is satisfied. The following example illustrates the difficulty:

\begin{example} \label{E5.1} Consider for $n=1$ and $0 < \tau < 1$ the linear delay equation
\begin{equation} \label{delay_simple}
\begin{aligned}
&\dot x(t) = x(t-\tau), &\ t > 0,\\
&x(t) = 1, & -1\le t \le 0.
\end{aligned}
\end{equation}
Here, we have $\varphi(t) = 1, \quad t\in [-1,0].$ Solving this equation stepwise on $[0,\tau]$, $[\tau,2\tau]$, and 
$[2\tau,3\tau]$, we find
\[
x[\tau](t) = \left\{
\begin{array}{ll}
1,&t \in [-1,0],\\[1ex]
t+1,&t \in (0,\tau],\\[1ex]
\frac{1}{2} (t-\tau)^2 + t + 1,&t \in (\tau,2\tau],\\[1ex]
\frac{1}{6}(t-2\tau)^3 +\frac{1}{2}(t-\tau)^2 + t+1,& t \in (2\tau,3\tau].
\end{array} 
\right.
\]
Differentiating $x[\tau]$ w.r. to $\tau$ in the single subintervals, we get
\[
x'[\tau](t) = \left\{
\begin{array}{ll}
0,&t \in [-1,\tau],\\[1ex]
-(t-\tau),&t \in (\tau,2\tau],\\[1ex]
- (t-2\tau)^2 -(t-\tau),& t \in (2\tau,3\tau].
\end{array} 
\right.
\]
This is a function of $H^1(-1,3)$. Differentiating again, we arrive at
\[
x''[\tau](t) = \left\{
\begin{array}{ll}
0,&t \in [-1,\tau],\\[1ex]
1,&t \in (\tau,2\tau],\\[1ex]
4 (t-2\tau) + 1,& t \in (2\tau,3\tau].
\end{array} 
\right.
\]
We see that $x''[\tau]$ exhibits a jump at $t = \tau$. It exists as a well defined function of $L^2(-1,3\tau)$, but we cannot differentiate it with respect to $t$ in $t = \tau$. Therefore, we cannot have a standard differential equation to determine  $x''[\tau]$ on the whole interval $[0,T]$.
In our example, the function $x[\tau]$ belongs to $H^1(-1,3)$, but its derivative $\dot x[\tau]$ is discontinuous at $t=0$. Note that  $\varphi$ does not satisfy the compatibility condition \eqref{compat}.
\end{example}

\begin{remark}We have differentiated $x[\tau]$ and $x'[\tau]$ on single subintervals of time. It is not obvious that this stepwise differentiation leads to a correct result, because the values $\tau$ and $2\tau$ need special care. Here, the computation is correct, because $x[\tau]$ and $x'[\tau]$ belong to $H^1(-1,3)$, see also Lemma \ref{piecewise*}.
\end{remark}

In view of the example, we will study the second-order differentiability of $x[\tau]$ first by differentiating the integral version of equation \eqref{equation_for_x'} with respect to $\tau$,
\begin{equation}\label{integral_equation_for_x'}
w(t) = \int_0^t \{-Df(x[\tau](s))w(s) + A w(s-\tau) - A\dot x[\tau](s-\tau)\}ds,
\end{equation}
where 
\[
w\in \Lnull = \{ w \in \raumL: w(t) = 0 \mbox{ a.e. in } (-b,0)\}.
\]
\begin{theorem} \label{second-order 1*}If $\varphi$, $f$, and $g$ obey Assumption \ref{A3}, then
the mapping $\tau \mapsto x[\tau]$  is twice continuously differentiable from  $[-b,0]$ to $\Lnull$.
\end{theorem}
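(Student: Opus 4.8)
The plan is to apply the implicit function theorem a second time, but now in the larger space $\Lnull$ rather than in $\raumH$: Example \ref{E5.1} shows that $x''[\tau]$ generically has a jump at $t=\tau$, so it cannot be expected to lie in $H^1$. Throughout, $x[\tau]$ is treated as a known ingredient. By Corollary \ref{Cor_x'} the map $\tau\mapsto x[\tau]$ is $C^1$ from $[0,b)$ into $\raumHeins$, and $w[\tau]:=x'[\tau]\in\raumH$ is the unique solution of \eqref{integral_equation_for_x'}. It therefore suffices to show that $\tau\mapsto w[\tau]$ is $C^1$ from $[0,b)$ into $\Lnull$; then $x''[\tau]=w'[\tau]$ exists in $\Lnull$ and $\tau\mapsto x[\tau]$ is twice continuously differentiable into $\Lnull$. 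The endpoint $\tau=b$ is handled by a one-sided derivative, exactly as in Lemma \ref{Lshift}.

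First I would recast \eqref{integral_equation_for_x'} as a zero of $\mathcal G(w,\tau)=w-\Phi(w,\tau)$, with $\mathcal G:\Lnull\times[0,b)\to\Lnull$, where $\Phi(w,\tau)(t)=\int_0^t\{-Df(x[\tau](s))w(s)+Aw(s-\tau)-A\dot x[\tau](s-\tau)\}\,ds$ on $[0,T]$ and $\Phi(w,\tau)=0$ on $[-b,0]$. The decisive point is the inhomogeneous term $-A\dot x[\tau](s-\tau)$: under Assumption \ref{A3} the function $\dot x[\tau]$ is only \emph{piecewise} $H^1$, with a jump at $t=0$ whenever the compatibility condition \eqref{compat} fails; the shift $\tau\mapsto\dot x[\tau](\cdot-\tau)$ is then merely H\"older continuous of order $1/2$ into $L^2$, so Lemma \ref{Lshift} does \emph{not} apply to it. The remedy, and the heart of the argument, is to perform the inner integration \emph{before} differentiating: the substitution $\sigma=s-\tau$ gives
\[
\int_0^t -A\,\dot x[\tau](s-\tau)\,ds=-A\big(x[\tau](t-\tau)-\varphi(-\tau)\big),
\]
replacing the jump-bearing $\dot x[\tau]$ by the continuous function $x[\tau]=z[\tau]+\phi\in\raumHeins$, to which Lemma \ref{Lshift} and Theorem \ref{differentiability} do apply.

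Next I would verify that $\mathcal G$ is of class $C^1$ from $\Lnull\times[0,b)$ into $\Lnull$, term by term. The term $\int_0^t Aw(s-\tau)\,ds$ equals $A\,W(t-\tau)$ with $W(r)=\int_0^r w$; since $w\mapsto W$ is bounded linear $\Lnull\to\raumH$, Lemma \ref{Lshift} makes this $C^1$ into $\Lnull$ with $\tau$-part $-A\,w(\cdot-\tau)$. The term $\int_0^t Df(x[\tau](s))w(s)\,ds$ is linear in $w$ with coefficient $Df(x[\tau])$ that is $C^1$ in $\tau$ into $C([-b,T],\mathbb{R}^{n\times n})\hookrightarrow L^\infty$ (here $f\in C^2$ is used, the derivative being $D^2f(x[\tau])x'[\tau]$). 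The reformulated inhomogeneity $-A(x[\tau](\cdot-\tau)-\varphi(-\tau))$ is $C^1$ into $\Lnull$ by the chain rule from Lemma \ref{Lshift} and Theorem \ref{differentiability}, with $\tau$-derivative $-A\big(x'[\tau](\cdot-\tau)-\dot x[\tau](\cdot-\tau)\big)-A\,\dot\varphi(-\tau)$; note that $\varphi\in H^2$ and $g\in H^1$ (so $x[\tau]\in H^2(0,T)$) guarantee that $\dot\varphi(-\tau)$ is a well-defined continuous function of $\tau$. I would emphasize that $\partial_\tau\Phi$ lands only in $\Lnull$ and not in $\raumH$: the contribution $A\,\dot x[\tau](\cdot-\tau)$ inherits the jump of $\dot x[\tau]$ at $0$, now placed at $t=\tau$ — precisely the jump of $x''[\tau]$ seen in Example \ref{E5.1}.

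Finally I would check that $D_w\mathcal G(w,\tau)=I-D_w\Phi(w,\tau)$ is an isomorphism of $\Lnull$. The operator $D_w\Phi(w,\tau)h(t)=\int_0^t\{-Df(x[\tau](s))h(s)+Ah(s-\tau)\}\,ds$ is a Volterra-type integral operator (the integral runs only up to $t$, and the shift $h(s-\tau)$ involves only earlier times), hence quasi-nilpotent, so $I-D_w\Phi$ is boundedly invertible; equivalently, the associated linear delay equation admits for every $d\in\Lnull$ a unique solution depending continuously on $d$, by the stepwise argument of Theorem \ref{differentiability}. The implicit function theorem then yields a $C^1$ branch $\tau\mapsto\tilde w(\tau)$ into $\Lnull$ solving $\mathcal G=0$; since any $\Lnull$-solution of $w=\Phi(w,\tau)$ automatically lies in $\raumH$ (the right-hand side always does), it coincides with $w[\tau]$ by the uniqueness in Corollary \ref{Cor_x'}. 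Thus $\tau\mapsto w[\tau]=x'[\tau]$ is $C^1$ into $\Lnull$, and the theorem follows. I expect the main obstacle to be exactly the treatment of $\dot x[\tau](\cdot-\tau)$: everything hinges on integrating it first, which trades the non-differentiable shift of a jump function for the differentiable shift of the continuous state and, at the same time, fixes the $L^2$-valued (rather than $H^1$-valued) character of the second derivative.
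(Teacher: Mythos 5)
Your proposal is correct, and its skeleton is the same as the paper's: the paper likewise applies the implicit function theorem to the integrated equation \eqref{integral_equation_for_x'} in $\Lnull\times[0,b]$, splits the right-hand side into the same three pieces (its $I_1,I_2,I_3$), handles $\int_0^t Aw(s-\tau)\,ds = AW(t-\tau)$ through Lemma \ref{Lshift}, uses $f\in C^2$ for the coefficient term, and inverts $I-\partial_w F$ by exactly your stepwise Volterra argument. The genuine divergence is at the step you correctly identify as the heart of the matter, the inhomogeneity $I_3(\tau)(t)=\int_0^t\dot x[\tau](s-\tau)\,ds = x[\tau](t-\tau)-\varphi(-\tau)$. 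The paper differentiates this \emph{piecewise} on $[0,\tau]$ and $(\tau,T]$ and justifies the piecewise differentiation by a dedicated auxiliary result (Lemma \ref{piecewise*}), proved via uniform bounds on the difference quotients on the subintervals $[0,\tau]$, $(\tau,\tau+\delta)$, $[\tau+\delta,T]$ and dominated convergence; that proof uses the full force of Assumption \ref{A3} (in particular $x[\tau]\in H^2(0,T;\mathbb{R}^n)$, hence $g\in H^1$, to make $\dot x[\tau]$ bounded). You instead differentiate $\tau\mapsto x[\tau](\cdot-\tau)$ \emph{globally} by the chain rule, writing $x[\tau]=z[\tau]+\phi$ and combining Theorem \ref{differentiability} with Lemma \ref{Lshift}; your resulting derivative $x'[\tau](\cdot-\tau)-\dot x[\tau](\cdot-\tau)+\dot\varphi(-\tau)$ agrees with the paper's piecewise $\xi[\tau]$, since $x'[\tau]=0$ and $\dot x[\tau]=\dot\varphi$ on $[-b,0]$. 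What each buys: your route dispenses with Lemma \ref{piecewise*} altogether and is shorter; it also uses Assumption \ref{A3} more sparingly ($\varphi\in H^2$ enters only through the continuity of $\tau\mapsto\dot\varphi(-\tau)$, and the $H^2$-regularity of $x[\tau]$ is not needed at this step), whereas the paper's dominated-convergence lemma delivers the pointwise, interval-by-interval formula for $\xi[\tau]$ directly, which is convenient for the subsequent derivation of the jump relation \eqref{jump*} and the piecewise differential equations for $x''[\tau]$. One small point you should make explicit: Lemma \ref{Lshift} is stated on $H^1_{[0]}(-b,T;\mathbb{R}^n)\times[0,b]$, but the summand $\phi(\cdot-\tau)$ requires differentiability of the shift of a fixed function of $\raumHeins$ that does \emph{not} vanish on $[-b,0]$; the lemma's proof extends verbatim to this case (for fixed first argument only the $R_z$ estimate is needed, which uses nothing beyond the $L^2$-continuity of translations on $(-b,T)$), but as literally stated the lemma does not cover $\phi$, so this one-line extension belongs in the argument. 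Finally, note that the interval in the theorem's statement should read $[0,b]$ (the printed $[-b,0]$ is a typo), with one-sided derivatives at the endpoints exactly as you handle them.
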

\begin{proof} For the application of the implicit function theorem, we introduce the
mapping $F: \Lnull \times [-b,0] \to  \Lnull$ defined by the right-hand side of \eqref{integral_equation_for_x'}
by
\[
 F(w,\tau)(t) =  \int_0^t  \left\{ - (Df(x[\tau])w)(s) + A w(s-\tau) - A \dot x[\tau](s-\tau) \right\}\,ds, \ t\in [0,T],
\]
and $ F(w,\tau)(t) = 0$ for $t\in [-b,0].$

We first show that $F$ is continuously differentiable. To this end, on $[0,T]$ we split $F$ as follows:
\[
F= - \int_0^t  Df(x[\tau])w\,ds + A \int_0^t w(s-\tau)ds - A\int_0^t\dot x[\tau](s-\tau)\,ds = I_1+AI_2-AI_3.
\]
{\em Differentiability of $I_1$:} Obviously, $I_1$ is of class $C^1$ w.r. to $w \in \Lnull$. For given $w$,
the differentiability w.r. to $\tau$ is seen as follows: We have
\begin{equation}\label{I1}
Df(x[\tau])w = \sum_{i=1}^n w_i \nabla f_i(x[\tau]).
\end{equation}
For each $i$, thanks to the assumption on $f$, the mapping $x(\cdot) \mapsto \nabla f_i(x(\cdot))$ is $C^1$ in $C([0,T],\mathbb{R}^n)$. By Theorem \ref{differentiability}, the function $\tau \mapsto x[\tau]$ is $C^1$ from
$[0,b]$ to $\raumH \hookrightarrow C([-b,T];\mathbb{R}^n)$, and by the chain rule  $\tau \mapsto 
\nabla f_i(x[\tau])$ is $C^1$ from $[0,b]$ to $ C([-b,T];\mathbb{R}^n)$.

In view of \eqref{I1}, it is now easy to confirm that $(w,\tau) \mapsto  \int_0^t  (Df(x[\tau])w)(s)ds$ is of class 
$C^1$ from $\Lnull \times [0,b]$ to $L^2(0,T;\mathbb{R}^n)$.

\noindent {\em Differentiability of $I_2$:} For $w \in \Lnull$ we have
\[
\int_0^t w(s-\tau)\, ds = \int_{-\tau}^{t-\tau} w(\sigma)d\sigma = \int_0^{t-\tau} w(\sigma)d\sigma = W(t-\tau),
\]
where $W(t) =  \int_{-b}^t w(s)ds$ belongs to $\raumH$. Therefore, we have
\[
I_2(w+h,\tau+\delta)(t) = W(t-\tau-\delta)+H(t-\tau-\delta)
\]
with $H(t) =  \int_{-b}^t h(s)ds \in \raumH$. The continuous differentiability now follows from Lemma \ref{Lshift}.
The derivative in the direction $h$ is
\[
H(t-\tau) - \dot W(t-\tau) =  \int_{0}^{t-\tau} h(s)ds- w(t-\tau).
\]
Consequently, $A I_2$ is of class $C^1$ from 
$\Lnull \times [0,b]$ to $\Lnull$. 

\noindent {\em Differentiability of $I_3$:} It holds
\[
\int_0^t\dot x[\tau](s-\tau)\,ds = x[\tau](t-\tau)-x[\tau](-\tau) = \left\{
\begin{aligned}
&\varphi(t-\tau)-\varphi(-\tau),\ \quad t \in [0, \tau],\\
&x[\tau](t-\tau)-\varphi(-\tau), \ t \in (\tau,T].
\end{aligned}
\right.
\]
Both functions after  the brace belong to $H^2$ on the associated intervals. Moreover, they are equal for $t=\tau$. Thanks to Lemma \ref{piecewise*},
we are justified to perform the differentiation with respect to $\tau$ on each of the  intervals and obtain
\[
\xi[\tau](t) := \partial_\tau \int_0^t\dot x[\tau](s-\tau)\,ds =  \left\{
\begin{aligned}
&-\dot \varphi(t-\tau)+\dot \varphi(-\tau),\ \qquad \qquad \quad \ \ \, t \in [0, \tau],\\
&x'[\tau](t-\tau)-\dot x[\tau](t-\tau)+ \dot \varphi(-\tau), \ t \in (\tau,T].
\end{aligned}
\right.
\]
For all $\tau \in [0,b]$, $\xi[\tau]$ is a function of $L^2(0,T;\mathbb{R}^n)$, which depends continuously 
on $\tau$.

The differentiability properties of $I_1,\, I_2, \, I_3$ imply the continuous differentiability of $F$ and
the associated partial derivatives are the following: 

We have $(\partial_w F(w,\tau)\, z)(t) = 0$ for $t \in [-b,0]$ and
\[
(\partial_w F(w,\tau)\, z)(t)=  
\int_0^t  \left\{ - Df(x[\tau](s))z(s) + A z(s-\tau) \right\} ds, \, t \in [0,T].
\]
Moreover,  $\partial_\tau F(w,\tau)(t) = 0$ holds for $t \in [-b,0]$ and
\[
\begin{aligned}
&(\partial_\tau F(w,\tau)\, \delta)(t)=  -\delta \int_0^t D^2f(x[\tau](s))(x'[\tau](s),w(s))\,ds\\
&\hspace{4cm} + \delta A w(t-\tau) - \delta A\, \xi[\tau](t), \, t \in [0,T].
 \end{aligned}
\]
The integral equation \eqref{integral_equation_for_x'} for $w \in \Lnull$ is equivalent to
\[
w -F(w,\tau)  = 0.
\]
For all $d \in \Lnull$, the equation
\[
(I - \partial_w F(w,\tau))z = d
\]
is equivalent to a linear Volterra integral equation that can be solved stepwise in time on the intervals
$[0,\tau]$, $[\tau,2 \tau]$ etc., where the term $A z(t-\tau)$ is given from the preceding interval. 
Therefore, for all $d \in \Lnull$, the equation above has a unique solution $z$ and the mapping $d \mapsto z$
is continuous in $\Lnull$.

For all $\tau \in [0,b]$, equation \eqref{integral_equation_for_x'}  has a unique solution $w[\tau] \in \Lnull$. Thanks to the implicit function theorem, the mapping $\tau \mapsto w[\tau]$ is continuously differentiable
from $[0,b]$ to $\Lnull$. By definition, we have $w[\tau]= x'[\tau]$, hence $\tau \mapsto x'[\tau]$ is continuously differentiable. This is equivalent to the claim of the theorem. 
\end{proof}
By Theorem \ref{second-order 1*}, we are justified to differentiate equation \eqref{integral_equation_for_x'} with respect to $\tau$. This leads to the following result:
\begin{corollary}Under Assumption \ref{A3}, we obtain $x''[\tau] \in \Lnull$  as the unique solution of the integral equation
\begin{align}
&x''[\tau](t)=\int_0^t \Big\{ -\big[D^2f(x[\tau])(x'[\tau],x'[\tau]) +Df(x[\tau])x''[\tau]\big](s) + A x''[\tau](s-\tau)\Big\}ds \nonumber \\[1ex]
& \hspace{1,5cm}- 2 A x'[\tau](t-\tau) + A \dot x[\tau](t-\tau) - A \dot \varphi(-\tau),\ t \in [0,T].\label{inteq_x''}
\end{align}
\end{corollary}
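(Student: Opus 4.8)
The plan is to differentiate the fixed-point identity underlying \eqref{integral_equation_for_x'}, namely $w[\tau]=F(w[\tau],\tau)$ with $F$ and $w[\tau]=x'[\tau]$ as in the proof of Theorem \ref{second-order 1*}, with respect to $\tau$. That theorem already supplies everything needed to make this rigorous: it shows that $F$ is of class $C^1$ on $\Lnull\times[0,b]$ and records the partial derivatives $\partial_w F(w,\tau)$ and $\partial_\tau F(w,\tau)$; it shows that $\tau\mapsto w[\tau]$ is $C^1$ from $[0,b]$ to $\Lnull$; and it establishes, via the stepwise Volterra argument on the intervals $[0,\tau],[\tau,2\tau],\dots$, that $I-\partial_w F(w,\tau)$ is an isomorphism of $\Lnull$. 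Consequently the term-by-term differentiation carried out below is legitimate, and the resulting linear integral equation for $x''[\tau]$ has a unique solution in $\Lnull$. By the chain rule applied to $w[\tau]-F(w[\tau],\tau)=0$ one gets
\[
x''[\tau]=\partial_w F(w,\tau)\,x''[\tau]+\partial_\tau F(w,\tau),
\]
so the whole matter reduces to collecting the $\tau$-derivatives of the three integrands in \eqref{integral_equation_for_x'} and simplifying.

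Concretely, I would differentiate the right-hand side of \eqref{integral_equation_for_x'} term by term. The first term $-\int_0^t Df(x[\tau])\,x'[\tau]\,ds$ differentiates, by the product and chain rules together with the $C^1$ dependence $\tau\mapsto x[\tau]$ from Theorem \ref{differentiability}, to $-\int_0^t\{D^2f(x[\tau])(x'[\tau],x'[\tau])+Df(x[\tau])\,x''[\tau]\}\,ds$. The second term $A\int_0^t x'[\tau](s-\tau)\,ds$, whose total $\tau$-derivative I compute via the substitution $\sigma=s-\tau$ and the Leibniz rule, yields $A\int_0^t x''[\tau](s-\tau)\,ds-A\,x'[\tau](t-\tau)$; the extra boundary term comes from the moving upper argument of the shift, while the lower-endpoint contribution vanishes because $x'[\tau]$ is extended by zero on $[-b,0]$. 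The third term $-A\int_0^t\dot x[\tau](s-\tau)\,ds$ differentiates to $-A\,\xi[\tau]$, where $\xi[\tau]$ is the two-branch function computed in the proof of Theorem \ref{second-order 1*}. The three integral contributions are exactly the integrand of \eqref{inteq_x''}, leaving the unintegrated remainder $-A\,x'[\tau](t-\tau)-A\,\xi[\tau](t)$ to be simplified.

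The only genuine computation is to recast that remainder in the single closed form $-2A\,x'[\tau](t-\tau)+A\dot x[\tau](t-\tau)-A\dot\varphi(-\tau)$ of \eqref{inteq_x''}, for which I would insert the explicit formula for $\xi[\tau]$ and treat the two cases separately. On $(\tau,T]$ one has $\xi[\tau](t)=x'[\tau](t-\tau)-\dot x[\tau](t-\tau)+\dot\varphi(-\tau)$, so the remainder becomes $-2A\,x'[\tau](t-\tau)+A\dot x[\tau](t-\tau)-A\dot\varphi(-\tau)$ after cancelling one copy of $x'[\tau](t-\tau)$; on $[0,\tau]$ one uses $\xi[\tau](t)=-\dot\varphi(t-\tau)+\dot\varphi(-\tau)$ together with $x'[\tau](t-\tau)=0$ and $\dot x[\tau](t-\tau)=\dot\varphi(t-\tau)$ for $t-\tau\le 0$, and checks that the same formula emerges, so the expression is valid uniformly on $[0,T]$. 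The delicate point in the argument is precisely the differentiation of the third term across the kink at $t=\tau$: the extension of $x[\tau]$ by $\varphi$ is in general only continuous there unless the compatibility condition \eqref{compat} holds, so the piecewise differentiation and the matching of the two branches at $t=\tau$ must be justified by Lemma \ref{piecewise*}. I expect this to be the main obstacle in principle, but since it is already carried out in the proof of Theorem \ref{second-order 1*}, at the level of this corollary it can be cited rather than redone, and what remains is the bookkeeping above.
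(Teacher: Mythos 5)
Your proposal is correct and takes essentially the same route as the paper: the paper's proof of this corollary consists precisely of differentiating \eqref{integral_equation_for_x'} with respect to $\tau$, with all justification (the $C^1$ dependence $\tau\mapsto x'[\tau]$, the partial derivatives of $F$, and the piecewise differentiation across $t=\tau$ via Lemma \ref{piecewise*}) imported from Theorem \ref{second-order 1*}, and your term-by-term computation together with the two-case simplification via $\xi[\tau]$ supplies exactly the bookkeeping the paper leaves implicit. One incidental remark: your sign $-A\,x'[\tau](t-\tau)$ for the boundary contribution of the shifted term is the correct one, consistent with Lemma \ref{Lshift} and with \eqref{inteq_x''}, whereas the expression for $\partial_\tau F$ displayed in the paper's proof of Theorem \ref{second-order 1*} shows $+\delta A\, w(t-\tau)$, which appears to be a sign typo there.
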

Next, we derive differential equations for $x''[\tau]$. By \eqref{inteq_x''}, there holds
\begin{equation} \label{H1}
x''[\tau](t) = \int_0^t\{\ldots\} ds - 2 A x'[\tau](t-\tau) + A \dot x[\tau](t-\tau) - A \dot \varphi(-\tau), \ t \in [0,T].
\end{equation}
It follows from \eqref{H1} that  the restriction of $x''[\tau]$ to $[0,\tau]$ belongs to $H^1(0,\tau;\mathbb{R}^n)$
and the restriction of $x''[\tau]$ to $[\tau,T]$ belongs to $H^1(\tau,T;\mathbb{R}^n)$. In $t=\tau$, 
$x''[\tau](t)$ can exhibit a jump that we determine next.

For $t < \tau$, we have
\[
\lim_{t \uparrow \tau} x''[\tau](t) =  \int_0^\tau\{\ldots\} ds + A \dot \varphi(0) - A \dot \varphi(-\tau),
\]
while we find  for $t > \tau$ 
\[
\lim_{t \downarrow \tau} x''[\tau](t) =  \int_0^\tau\{\ldots\} ds + A \dot x[\tau](+0) -A \dot \varphi(-\tau).
\]
Therefore, the jump in  $t=\tau$ is
\begin{equation} \label{jump*}
x''[\tau](\tau+0)-x''[\tau](\tau-0)=  A\dot x[\tau](0^+)-A\dot \varphi(0).
\end{equation}
If the compatibility condition \eqref{compat} is fulfilled, then the jump is zero. In this case, the function
$x''[\tau]$ belongs to $H^1(-b,T;\mathbb{R}^n)$.

Two differential equations for $x''[\tau]$ can be established; one on $[0,\tau]$, another on $[\tau,T]$.

{\em Case $t \in [0,\tau]$:} Here,  the differentiation of \eqref{H1} w.r. to $t$ yields
\begin{align}
&\partial_t x''[\tau](t) =-  Df(x[\tau](t))x''[\tau](t) + Ax''[\tau](t-\tau)\nonumber \\
& \hspace{2cm}-(D^2f(x[\tau])(x'[\tau],x'[\tau]))(t) + \ddot \varphi(t-\tau), \ t \in (0,\tau],\label{x''untiltau}\\
&\quad x''[\tau](t) = 0, \ t \in [-b,0]. \label{x''until0}
\end{align}

{\em Case $t \in [\tau,T]$:} In view of \eqref{jump*}, in $t=\tau$ we have to start with the new initial value 
\[
x''[\tau](\tau+0)=x''[\tau](\tau-0)+  A\dot x[\tau](0^+)-A\dot \varphi(0).
\]
We arrive at the differential equation  
\begin{align}
&\partial_t x''[\tau](t) =-  Df(x[\tau](t))x''[\tau](t) + Ax''[\tau](t-\tau)\nonumber \\
& \qquad -(D^2f(x[\tau])(x'[\tau],x'[\tau]))(t) -2 A \dot x'[\tau](t-\tau)+  A \ddot x[\tau](t-\tau), \ t \in (\tau,T],\label{x''aftertau}\\
&\quad x''[\tau](\tau) = x''[\tau](\tau-0)+  A\dot x[\tau](0^+)-A\dot \varphi(0),\label{new_initialvalue}\\
&\quad x''[\tau](t-\tau) = x''[\tau]|_{[0,\tau]}(t-\tau), \ t \in [\tau,2\tau). \label{new_initialdata}
\end{align}
The last equation means that we have to insert $x''[\tau](t-\tau)$ obtained from the differential equation on $[0,\tau]$
in the right-hand side of \eqref{x''aftertau}.

We  differentiated \eqref{inteq_x''} on the whole interval $[\tau,T]$. Should we have expected another jump for $x''[\tau]$ of \eqref{x''aftertau}-\eqref{new_initialdata}  in $t=2\tau$? The answer is no, because
the new initial value function $\tilde \varphi(t) = x''[\tau]|_{[0,\tau]}(t-\tau)$ obeys the compatibility condition
in $t = \tau$ as can easily be checked; cf. also Remark \ref{Rsemigroup}.

Summarizing, we obtain the following information on $x''[\tau]$:
\begin{theorem} The mapping $\tau \mapsto x[\tau]$ is twice continuously differentiable with respect to $\tau$ with image in  $\Lnull$. The equation for $x''[\tau]$ is given by 
\begin{equation} \label{equation_x''_impuls2}
\begin{aligned}
&\partial_t x''[\tau](t) +Df(x[\tau](t))x''[\tau](t)  + (D^2f(x[\tau](t))(x'[\tau](t),x'[\tau](t))\\
&= Ax''[\tau] (t-\tau)  - 2A(\partial_t x'[\tau])(\fredi{t}-\tau)  + A \ddot x[\tau](t-\tau)
+ \mu_\tau \mbox{ in }(0,T],\\
&x''[\tau](t) = 0 \quad \mbox{ in }[-b,0],
\end{aligned}
\end{equation}
where $\mu_\tau= A(\dot x[\tau](0+) - \dot \varphi(0)) \, \delta(\tau)$, and  $\delta(\tau)$ is the Dirac measure located at $\tau$.
\end{theorem}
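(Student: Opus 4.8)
The plan is to regard this theorem as the consolidation of the piecewise computations already carried out, recast as a single equation in the sense of distributions on $(0,T)$. The twice continuous differentiability of $\tau\mapsto x[\tau]$ with image in $\Lnull$ is exactly the conclusion of Theorem \ref{second-order 1*}, so $x''[\tau]\in\Lnull$ is already at our disposal; the only thing left is to show that \eqref{equation_x''_impuls2} faithfully encodes the two classical equations \eqref{x''untiltau} and \eqref{x''aftertau} together with the jump \eqref{jump*}, the jump being carried by the Dirac term $\mu_\tau$.

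First I would record the time regularity of $x''[\tau]$. From \eqref{H1} one reads off that $x''[\tau]|_{[0,\tau]}\in H^1(0,\tau;\mathbb{R}^n)$ and $x''[\tau]|_{[\tau,T]}\in H^1(\tau,T;\mathbb{R}^n)$, with a single possible discontinuity at $t=\tau$ of size \eqref{jump*}. For such a piecewise-$H^1$ function the distributional time derivative equals the classical derivative on each of the two open subintervals plus the jump times the Dirac measure $\delta_\tau$ located at $t=\tau$. This is the standard jump formula: testing against $\psi\in C_c^\infty(0,T)$ and integrating $\int_0^T x''[\tau]\,\dot\psi\,dt$ by parts separately on $(0,\tau)$ and $(\tau,T)$, the boundary terms at $\tau$ combine into $(x''[\tau](\tau+0)-x''[\tau](\tau-0))\,\psi(\tau)$, i.e. into a Dirac mass with the weight \eqref{jump*}.

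Next I would merge the two classical equations into the common right-hand side of \eqref{equation_x''_impuls2}. On $(\tau,T)$ that right-hand side is literally the one of \eqref{x''aftertau}. On $(0,\tau)$ one has $t-\tau\in(-\tau,0)\subset(-b,0)$, where both $x'[\tau]$ and $x''[\tau]$ vanish identically by \eqref{equation_for_x'} and \eqref{x''until0}; hence the shifted terms $A x''[\tau](t-\tau)$ and $2A(\partial_t x'[\tau])(t-\tau)$ drop out, while $A\ddot x[\tau](t-\tau)=A\ddot\varphi(t-\tau)$ since $x[\tau]=\varphi$ on $[-b,0]$. Thus on $(0,\tau)$ the unified right-hand side reduces to that of \eqref{x''untiltau}, and the classical part of \eqref{equation_x''_impuls2} holds on $(0,T)\setminus\{\tau\}$. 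Adding the Dirac contribution supplied by the jump formula and identifying its weight with \eqref{jump*} yields precisely $\mu_\tau=A(\dot x[\tau](0^+)-\dot\varphi(0))\,\delta_\tau$ on the right, which is \eqref{equation_x''_impuls2}; the homogeneous condition on $[-b,0]$ is \eqref{x''until0}.

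The delicate point is the bookkeeping across the break at $t=\tau$. One must check that the shifted data $x''[\tau](t-\tau)$ and $(\partial_t x'[\tau])(t-\tau)$ feeding the right-hand side for $t\in(\tau,2\tau)$ are the well-defined traces inherited from the first subinterval, which is the meaning of \eqref{new_initialdata}, and that no new Dirac mass is generated at $t=2\tau$. The latter is the compatibility observation made just before the theorem: the new initial function $t\mapsto x''[\tau]|_{[0,\tau]}(t-\tau)$ meets the compatibility condition at $t=\tau$ in the sense of \eqref{compat} (cf. Remark \ref{Rsemigroup}), so the single Dirac mass in \eqref{equation_x''_impuls2} is the one at $t=\tau$ recorded in \eqref{new_initialvalue}. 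Collecting the two classical equations and this lone jump then gives the stated measure-valued equation.
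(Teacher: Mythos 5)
Your proposal is correct and takes essentially the same route as the paper, whose ``proof'' of this theorem is precisely the preceding derivation: you combine Theorem \ref{second-order 1*} for the $C^2$ property with the piecewise equations \eqref{x''untiltau}, \eqref{x''aftertau} and the jump \eqref{jump*}, merely making the distributional jump formula and the absence of a further Dirac mass at $t=2\tau$ explicit. Your reduction on $(0,\tau)$, with $A\ddot x[\tau](t-\tau)=A\ddot\varphi(t-\tau)$, is moreover the correct reading of \eqref{x''untiltau}, where the factor $A$ in front of $\ddot\varphi$ was evidently dropped by typo.
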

\begin{example} Continuing the discussion of Example \ref{E5.1}, we recall that\\
$
x[\tau](t)= t+1,  \ t \in (0,\tau).
$
The differential equation for $x''[\tau]$ on $[t,2\tau]$ is
\[
\dot x''[\tau] = x''[\tau](t-\tau) - 2 \dot x'[\tau](t-\tau) + \ddot x[\tau](t-\tau), \quad t \in (\tau,2 \tau).
\]
All functions on the right-hand side are zero, 
thus $x''[\tau]$ is constant on $(\tau,2\tau)$. Thanks to \eqref{new_initialvalue}, the associated initial value is 
\[
x''[\tau](\tau +0) = x''[\tau](\tau - 0) + \dot x[\tau](0+) - \dot \varphi(0),
\]
hence we find
$
x''[\tau](\tau) = \dot \varphi(-\tau)= 1. 
$
Therefore,  it holds  $x''[\tau](t) = 1$ on $[\tau,2\tau]$ and this complies with the computation of $x''[\tau]$
in Example \ref{E5.1}. 
\end{example}

\begin{example} We conclude the discussion of Example \ref{E5.1} by the  optimization problem
\[
\min_{0 \le \tau \le 1} j(\tau):= \frac{1}{2}\int_0^T |x[\tau](t) - x_d(t)|^2 dt,
\]

for equation \eqref{delay_simple} with $T=1$. We consider 3 different settings. 

(a) First,  we select $\tau = 0.5$ and $x_d=x[0.5]$, then we have $j(\tau)=0$ so that $\tau = 0.5$ affords the global
minimum to $j$. Clearly  $ j'(0.5)=0$ holds  and 
\[
j''(0.5) = \int_0^1 |x'[\tau](t)|^2 dt +\int_0^1 (x[0.5](t) - x_d(t))x''[\tau](t) dt =  \int_0^1 |x'[\tau](t)|^2 dt>0.
\]
 By Theorem \ref{theo4.1}, $\tau= 0.5$ is a strict local minimizer.

(b) Next, we fix $x_d(t) = e^t + 1$ and confirm that $\tau = 0$ is a local minimizer.  For $\tau = 0$, the delay equation reduces to the ordinary differential equation $\dot x(t) = x(t)$ with initial condition $x(0)=1$ having the solution $x[0](t)=e^t$.
Equation \eqref{equation_for_x'} for $w(t) = x'[0](t)$ becomes
\[
\dot w(t) = w(t) - \dot x[0](t) = w(t) - e^t, \quad w(0)=0
\]
with solution $-t e^t$. It follows
\[
j'(0) = \int_0^1 (x[0](t) - x_d(t))x'[\tau](t) dt =  \int_0^1 (e^t - (1+e^t))(-te^{-t})\, dt > 0.
\]
Thanks to Theorem \ref{theo4.1}, (ii), $\tau =0$ is a strict local minimizer of $j$.

(c) Finally, we select $\tau = 1$ and $x_d(t)=t$. The state associated with $\tau=1$ is $x[1](t) = t+1$.
This linear function is smaller than all other functions $x[\tau](t)$ for $\tau < 1$, hence it is the closest
to $x_d$. Notice that for $\tau < 1$ the solution $x[\tau]$ grows faster than $t+1$ for $t > \tau$. This simple observation shows that $\tau=1$ is a global minimizer of $j$. 
However, this cannot be concluded from Theorem \ref{theo4.1}, because $x'[1]=x''[1]\equiv 0$, hence $j'(1)=j''(1)=0$.
\end{example}

We conclude this section with an auxiliary result, which was used in the proof of Theorem \ref{second-order 1*}:
\begin{lemma} \label{piecewise*}If Assumption \ref{A3} is satisfied and $x[\tau]$ is the solution of \eqref{delayeq}, then 
\[
\partial_\tau \int_0^t\dot x[\tau](s-\tau)\,ds =  \left\{
\begin{aligned}
&-\dot \varphi(t-\tau)+\dot \varphi(-\tau),\ \qquad \qquad \quad  \ \ t \in [0, \tau],\\
&x'[\tau](t-\tau)-\dot x[\tau](t-\tau)+ \dot \varphi(-\tau), \ t \in (\tau,T].
\end{aligned}
\right.
\]
\end{lemma}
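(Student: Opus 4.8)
The plan is to reduce the integral to boundary values via the fundamental theorem of calculus, and then to obtain the $\tau$-derivative of the resulting shifted state by combining the $C^1$-dependence of $\tau \mapsto x[\tau]$ (Corollary \ref{Cor_x'}) with the differentiability of the shift map (Lemma \ref{Lshift}); the piecewise structure then emerges automatically and need not be imposed by hand. First I would note that, since $x[\tau] \in \raumHeins$ is absolutely continuous, the substitution $\sigma = s-\tau$ gives
\[
\int_0^t \dot x[\tau](s-\tau)\,ds = \int_{-\tau}^{t-\tau}\dot x[\tau](\sigma)\,d\sigma = x[\tau](t-\tau) - x[\tau](-\tau) = x[\tau](t-\tau) - \varphi(-\tau),
\]
where I used $x[\tau]=\varphi$ on $[-b,0]$ and $-\tau\in[-b,0]$. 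Viewing the right-hand side as an $L^2(0,T;\mathbb{R}^n)$-valued function of $\tau$, it remains to differentiate the shifted term $x[\tau](\cdot-\tau)$ and the scalar boundary term $\varphi(-\tau)$.

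For the shifted term I would use the decomposition $x[\tau]=z[\tau]+\phi$, so that $z[\tau]\in\raumH$ is $C^1$ in $\tau$ by Theorem \ref{differentiability} and $\phi$ is a fixed $H^1$-extension. Applying Lemma \ref{Lshift} along the $C^1$-curve $\tau\mapsto(z[\tau],\tau)$ and the chain rule, together with the corresponding translation estimate for the fixed $\phi$ (which only requires $\dot\phi\in L^2$), yields
\[
\partial_\tau x[\tau](\cdot-\tau) = z'[\tau](\cdot-\tau) - \dot z[\tau](\cdot-\tau) - \dot\phi(\cdot-\tau) = x'[\tau](\cdot-\tau) - \dot x[\tau](\cdot-\tau)
\]
in $L^2(0,T;\mathbb{R}^n)$, using $z'[\tau]=x'[\tau]$ and $\dot z[\tau]+\dot\phi=\dot x[\tau]$. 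Since $\varphi\in H^2(-b,0;\mathbb{R}^n)\hookrightarrow C^1$, the boundary term contributes $\partial_\tau(-\varphi(-\tau))=\dot\varphi(-\tau)$, so that, uniformly on $(0,T)$,
\[
\partial_\tau \int_0^t \dot x[\tau](s-\tau)\,ds = x'[\tau](t-\tau) - \dot x[\tau](t-\tau) + \dot\varphi(-\tau).
\]
Finally I would read off the two branches: for $t\in(\tau,T]$ this is already the asserted expression, while for $t\in[0,\tau]$ the shifted argument $t-\tau\in[-\tau,0]$ lies in the prehistory, where $x'[\tau]\equiv 0$ (Corollary \ref{Cor_x'}) and $\dot x[\tau]=\dot\varphi$, so the formula collapses to $-\dot\varphi(t-\tau)+\dot\varphi(-\tau)$.

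The main obstacle is to argue that the breakpoint $t=\tau$, whose location moves with $\tau$, contributes no spurious interfacial term, even though $\dot x[\tau]$ may jump at the origin when the compatibility condition \eqref{compat} fails. This is settled by observing that the two branches agree at $t=\tau$ (both equal $\varphi(0)-\varphi(-\tau)$, since $x[\tau](0)=\varphi(0)$), so that $t\mapsto\int_0^t\dot x[\tau](s-\tau)\,ds$ is continuous across the interface; consequently the shift calculus above delivers the derivative directly as a single $L^2$ element, with no Dirac contribution at $t=\tau$, which is precisely what legitimizes the interval-wise differentiation invoked in the proof of Theorem \ref{second-order 1*}. The boundary case $\tau=b$ is treated, as in Lemma \ref{Lshift}, by passing to one-sided ($\delta<0$) derivatives.
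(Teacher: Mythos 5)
Your proof is correct, but it takes a genuinely different route from the paper's. The paper works directly with the piecewise function $\psi(\tau,t)$ (equal to $\varphi(t-\tau)$ on $[0,\tau]$ and to $x[\tau](t-\tau)$ on $(\tau,T]$), computes the pointwise limit of the difference quotient on each open subinterval, and then upgrades to convergence in $L^2(0,T;\mathbb{R}^n)$ via dominated convergence: the bulk of its proof is spent bounding $\frac{1}{\delta}(\psi(\tau+\delta,t)-\psi(\tau,t))$ uniformly in $\delta$ on the three pieces $[0,\tau]$, $(\tau,\tau+\delta)$, $[\tau+\delta,T]$, with the moving sliver $(\tau,\tau+\delta)$ handled by a mean value argument that leans on the $H^2$-regularity of $\varphi$ and $x[\tau]$ (hence on Assumption \ref{A3} and $g\in H^1$). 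You instead collapse the integral by the fundamental theorem of calculus to the single expression $x[\tau](t-\tau)-\varphi(-\tau)$, valid for all $t\in[0,T]$ since $x[\tau]\in H^1(-b,T;\mathbb{R}^n)$ extends $\varphi$, and then differentiate the shifted state by the chain rule, composing the $C^1$-dependence $\tau\mapsto z[\tau]$ (Theorem \ref{differentiability}) with the Fr\'echet differentiability of the shift (Lemma \ref{Lshift}), plus the elementary translation estimate for the fixed extension $\phi$; the two branches of the asserted formula then fall out by evaluating $x'[\tau]$ and $\dot x[\tau]$ on the prehistory. This is shorter and structurally cleaner: the moving breakpoint $t=\tau$ disappears from the argument entirely (your closing observation that both branches equal $\varphi(0)-\varphi(-\tau)$ at $t=\tau$ explains a posteriori why no interfacial term can arise), and the shift-calculus step needs only $H^1$-regularity, with Assumption \ref{A3} entering solely through $\varphi\in H^2\hookrightarrow C^1$ for the boundary term $\dot\varphi(-\tau)$. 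What the paper's heavier computation buys in exchange is self-containedness and slightly more information, namely pointwise a.e. convergence of the difference quotient and convergence in every $L^p(0,T;\mathbb{R}^n)$, $1\le p<\infty$, rather than only in $L^2$. One cosmetic correction: your phrase ``uniformly on $(0,T)$'' should read ``as an identity in $L^2(0,T;\mathbb{R}^n)$'', since that is the topology your chain-rule argument actually delivers.
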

\begin{proof} We recall for convenience that
\[
\int_0^t\dot x[\tau](s-\tau)\,ds = \left\{
\begin{aligned}
&\varphi(t-\tau)-\varphi(-\tau),\ \ \ \quad t \in [0, \tau],\\
&x[\tau](t-\tau)-\varphi(-\tau), \quad t \in (\tau,T].
\end{aligned}
\right.
\]
The term $-\varphi(-\tau)$ appears in both intervals and does not cause difficulties for piecewise differentiation.
Therefore, it suffices to consider the function
\[
\psi(\tau,t) = \left\{
\begin{aligned}
&\varphi(t-\tau),\ \quad t \in [0, \tau],\\
&x[\tau](t-\tau), \ t \in (\tau,T].
\end{aligned}
\right.
\]
We first assume $0 < \tau < T$, then
\begin{equation} \label{partial_tau}
\partial_\tau \psi(\tau,t) := \lim_{\delta \to 0} \frac{1}{\delta}(\psi(\tau+\delta,t)-\psi(\tau,t)).
\end{equation}
For every $t \in (0,\tau) \cup (\tau,T)$, this limes exists and we obtain
\[
\partial_\tau \psi(\tau,t) =  \left\{
\begin{aligned}
&-\dot \varphi(t-\tau),\ \qquad \qquad \qquad t \in [0, \tau],\\
&x'[\tau](t-\tau)-\dot x[\tau](t-\tau), \ t \in (\tau,T].
\end{aligned}
\right.
\]
After adding the neglected term $\dot\varphi(-\tau)$, this is the claim of the Lemma in pointwise sense. We show by the Lebesgue dominated convergence theorem that the limes exists in the sense of $L^2(0,T;\mathbb{R}^n)$. For this purpose we confirm that the difference quotient above is bounded independently of $\delta$. 

We first assume $\delta > 0$
and consider the intervals
$[0,\tau], \, (\tau,\tau+\delta),\,$ and $[\tau+\delta,T]$ separately. We have
\[
\psi(\tau+\delta,t)-\psi(\tau,t) =  \left\{
\begin{aligned}
&\varphi(t-\tau-\delta) - \varphi(t-\tau),\ \qquad \quad \ \ \, t \in [0, \tau],\\
&\varphi(t-\tau-\delta) - x[\tau](t-\tau),\ \qquad \ \ t \in (\tau, \tau+\delta),\\
&x[\tau+\delta](t-\tau-\delta)-x[\tau](t-\tau), \ t \in (\tau+\delta,T].
\end{aligned}
\right.
\]
Now we derive bounds on each subinterval.

\noindent {\em Interval $[0,\tau]$:}  Since $\varphi \in H^2(-b,0;\mathbb{R}^n)$, the function $\dot \varphi$ is continuous, hence
\[
\left|\frac{1}{\delta}(\varphi(t-\tau-\delta) - \varphi(t-\tau))\right|\le \int_0^1 |\dot \varphi(t-\tau -s \delta) |ds \le \|\dot\varphi\|_{C([-b,0],\mathbb{R}^n)} < \infty.
\]

\noindent {\em Interval $[\tau+\delta,T]$:} We split
\[
\begin{aligned}
&\frac{1}{\delta}(x[\tau+\delta](t-\tau-\delta)-x[\tau](t-\tau)) = \frac{1}{\delta}(x[\tau+\delta](t-\tau-\delta)-x[\tau](t-\tau-\delta))\\
&\qquad \qquad + \frac{1}{\delta}(x[\tau](t-\tau-\delta)-x[\tau](t-\tau))) = I + II.
\end{aligned}
\]
By Corollary \ref{Cor_x'}, the function $\tau \mapsto x'[\tau]$ belongs to $C([0,b],H^1(0,T;\mathbb{R}^n)) \hookrightarrow C([0,b],C([0,T];\mathbb{R}^n))$,
hence 
\[
\begin{aligned}
&|I| \le \int_0^1\left| x'[\tau + s \delta](t-\tau-\delta)\right|ds \le \max_{(\tau,t) \in [0,b]\times [0,T]} |x'[\tau](t)|,\\
&
|II| \le \int_0^1\left| \dot x[\tau](t-\tau-s\delta)\right|ds \le \max_{t \in [0,T]} |\dot x[\tau](t)|.
\end{aligned}
\]
Here we exploited the fact that $x[\tau]\in H^2(0,T;\mathbb{R}^n)$, cf. Thm. \ref{well-posedness}.

\noindent {\em Interval $(\tau,\tau+\delta)$:} Here, the situation is a bit more difficult. By the mean value theorem
in integral form, we write
\[
\begin{aligned}
&\varphi(t-\tau-\delta) = \varphi(0) +  \int_0^1 \dot \varphi(s(t-\tau- \delta)) (t-\tau-\delta) ds\\
&x[\tau](t-\tau) = \underbrace{x[\tau](0)}_{=\varphi(0)}+  \int_0^1 \dot x[\tau](s(t-\tau)) (t-\tau) ds.
\end{aligned}
\]
Therefore,
\[
\begin{aligned}
&\frac{1}{\delta}\left|\varphi(t-\tau-\delta)-    x[\tau](t-\tau) \right| \\
&\qquad \le \int_0^1 |\dot \varphi(s(t-\tau- \delta)) |\,ds \frac{|t-\tau-\delta|}{\delta}
+ \int_0^1 |\dot x[\tau](s(t-\tau))|\,ds \frac{|t-\tau|}{\delta} \le c.
\end{aligned}
\]
Again, we invoked the $H^2$-regularity of $\varphi$ and $x[\tau]$ on $[-b,0]$ and $[0,T]$, respectively.
Moreover, we used $-\delta \le t-\tau-\delta \le 0$ and $0 \le t -\tau\le \delta$ for $t \in [\tau,\tau+\delta]$.

Thanks to our  estimates, the difference quotient $\frac{1}{\delta}(\psi(\tau+\delta,t)-\psi(\tau,t))$ is uniformly bounded for all $\delta > 0$. The case $\delta < 0$ can be handled analogously by the splitting 
$[0,T] = [0,\tau - \delta] \cup [\tau - \delta, \tau] \cup [\tau,T]$.

Now we apply the Lebesgue dominated convergence theorem for $\delta \to 0$. It implies
that the limes \eqref{partial_tau} exists in $L^1(0,T;\mathbb{R}^n)$. In view of the uniform boundedness of
the difference quotient, the limes exists in  $L^p(0,T;\mathbb{R}^n)$ for all $1 \le p < \infty$, in particular in
$L^2(0,T;\mathbb{R}^n)$.

For $\tau=0$ and $\tau = T$, we only consider the one-sided derivatives with $\delta \downarrow 0$ and
$\delta \uparrow T$, respectively, in the same way.
\end{proof}

\section{Extension to multiple time delays} \label{S6}

Here we briefly comment on the extension to an equation with multiple time delays of the form
\begin{equation} \label{delayeq_mult}
\dot x(t) + f(x(t)) = \sum_{l=1}^m A_l x(t-\tau_l) + g(t),
\end{equation}
with given matrices $A_l \in \mathbb{R}^{n\times n}$, $l=1,\ldots,m$, and delays $0 \le \tau_1 < \ldots < \tau_m \le b$. For convenience we write $\tau = (\tau_1,\ldots,\tau_m)$.

Also for multiple time delays, a discontinuity of $\dot x[\tau]$ can appear at $t=0$ only:
Indeed  the compatibility condition is now given by
$$\varphi(0) =  - f(x[\tau])(0) +  \sum_{l=1}^m A_l \varphi(-\tau_l) + g(0).$$
For $g \in H^1(0,T;\mathbb{R}^n)$, the function $\dot x[\tau]$ belongs to $H^1(0,b;\mathbb{R}^n)$. Therefore $\dot x[\tau]$ will not exhibit discontinuities after $t = 0$. However,  $t \mapsto \dot x[\tau](t)$ from $[-b,T] \to \mathbb{R}^n$ has a jump at $t=0$, in general.

Let us briefly sketch the main extensions of the results of the previous sections.

\noindent {\bf Well-posedness of \eqref{delayeq_mult} and first-order differentiability.} For the first-order analysis, we require Assumption \ref{A1}.
The well-posedness of \eqref{delayeq_mult} can be shown analogously to Theorem \ref{well-posedness}. Moreover, the first-order sensitivity analysis follows the derivation for a single time delay. 
Theorem \ref{differentiability} on existence of the first-order derivatives extends to multiple delays, i.e. to the existence of $\partial_{\tau_i} x[\tau]$, $i=1,\ldots,m$, with an analogous proof. The partial derivatives are subsequently obtained by differentiating the integral equation for $x[\tau]$ as in Corollary \ref{Cor_x'}.
We obtain that $w:= \partial_{\tau_i} x[\tau]$ is the unique solution to 
\begin{equation} \label{x'_mult}
\begin{aligned}
&\dot w(t) = - Df(x[\tau](t))w(t) + \sum_{l=1}^m A_l w(t-\tau_l) - A_i \dot x[\tau](t-\tau_i), \ t \in (0,T],\\
&w(t) = 0, \ t \in [-b,0].
\end{aligned}
\end{equation}
The adjoint equation for multiple time delays is defined by
\begin{equation} \label{ADmult}
\begin{aligned}
&-\dot p(t) = -Df(x[\tau](t))^\top p(t) +\sum_{l=1}^m  A_l^\top p(t+\tau_l) + x[\tau](t) - x_d(t), \quad t \in [0,T],\\
&\quad \ p(t) = 0, \quad t \in [T,T+b].
\end{aligned}
\end{equation}
Its unique solution $p$ is the adjoint state associated with $\tau$, denoted by $p[\tau]$.
We obtain the following results of the first and second-order sensitivity analysis of $j$:

The expression for $j'$ in terms of the adjoint is found to be 

\begin{equation} \label{Dj}
\nabla_\tau  j(\tau) = - col_i \int_0^T \langle p[\tau], A_i \dot x[\tau](t-\tau_i)\rangle dt.
\end{equation}

For the second partial derivatives of $j$  we obtain under Assumption \ref{A3}
\begin{equation}\label{D2jF}
\begin{aligned}
&\displaystyle  \partial^2_{\tau_k, \tau_i}j(\tau) =\displaystyle  \int_0^T \big\langle \frac{\partial x}{\partial{\tau_i}}[\tau], \frac{\partial x}{\partial{\tau_k}}[\tau]\big\rangle\,dt -  
\int_0^T \big\langle p[\tau],D^2f(x[\tau])(\frac{\partial x}{\partial{\tau_i}}[\tau],\frac{\partial x}{\partial{\tau_k}}[\tau]) \big\rangle dt\\[1ex]
&\qquad \displaystyle - \int_{\tau_i}^T \big \langle p[\tau](t), A_i\frac{\partial \dot x}{\partial{\tau_k}}[\tau](t-\tau_i)\big\rangle dt
- \int_{\tau_k}^T\big  \langle p[\tau](t), A_k\frac{\partial \dot x}{\partial{\tau_i}}[\tau](t-\tau_k)
\Big \rangle dt \\
&\qquad \displaystyle 
\langle p[\tau](\tau_k), A_k\big(\dot x[\tau](0^+)-\dot \varphi(0)\big)\rangle \delta_{ik}\\[1ex]
&\qquad \displaystyle +\big( \int_0^{\tau_i} \langle p[\tau](t), A_i \ddot \varphi(t-\tau_i)\rangle dt + \int_{\tau_i}^T\langle p[\tau](t),A_i \ddot x[\tau](t-\tau_i)\rangle dt \; \big) \delta_{ik},
\end{aligned}
\end{equation}
where $\delta_{ik}$ denotes the Kronecker symbol. 

\noindent {\bf Second-order differentiability of the state.} For the next results, Assumption \ref{A3} is needed. The mapping $\tau \mapsto x[\tau]$ is twice
continuously differentiable from $[0,b]^m$ to $\Lnull$.  We confirm this by the integrated version of
equation \eqref{x'_mult} for $w = \partial_{\tau_i}x[\tau]$,
\[
w(t) = \int_0^t \Big\{ - Df(x[\tau](s))w(s) + \sum_{l=1}^m  A_l w(s-\tau_l)\Big \}ds - A_i (x[\tau](t-\tau_i) - \varphi(-\tau_i)), 
\]
$t \in (0,T]$. To show  the differentiability of this equation w.r. to $\tau_j$, we apply the implicit function theorem as in the proof  of Theorem \ref{second-order 1*}.

Having the differentiability, we differentiate the integral equation above w.r. to $\tau_j$. This leads to an integral equation for $v = \partial_{\tau_j}w[\tau]=\partial_{\tau_j,\tau_i} x[\tau]$.
Taking care of possible
jumps of the functions $t \mapsto \dot x[\tau](t-\tau_i)$ and  $t \mapsto \dot x[\tau](t-\tau_j)$ in $t = \tau_i$
and $t = \tau_j$, respectively, we differentiate this equation w.r. to $t$.
Finally, we arrive at the following 
{\em delay differential equation with impulses  for $v =\partial_{\tau_k,\tau_i} x[\tau]$:}
\begin{equation}\label{eq:_ki}
\begin{aligned}
&\partial_t v(t) +Df(x[\tau](t))v(t)  + (D^2f(x[\tau](t))(\partial_{\tau_k}x[\tau](t),\partial_{\tau_i}x[\tau](t))\\
&\quad = \sum_{l=1}^m  A_l v (t-\tau)  -  A_k (\partial_{\tau_i} \dot x[\tau])(t-\tau_k) - A_i (\partial_{\tau_k} \dot x[\tau])(t-\tau_i)\\
& \qquad \quad + \delta_{ik} A_i \ddot x[\tau](t-\tau_i) + \delta_{ik} \mu_{\tau_i} \mbox{ in }(0,T],\\
&v(t) = 0 \ \mbox{ in }[-b,0],
\end{aligned}
\end{equation}
where $\mu_{\tau_i}= A_i(\dot x[\tau](0+) - \dot \varphi(0)) \, \delta(\tau_i)$.  We skip the details. 

\providecommand{\href}[2]{#2}
\providecommand{\arxiv}[1]{\href{http://arxiv.org/abs/#1}{arXiv:#1}}
\providecommand{\url}[1]{\texttt{#1}}
\providecommand{\urlprefix}{URL }


\begin{thebibliography}{10}

\bibitem {bb79}
\newblock  H.T.~Banks and J.A.~Burns,
\newblock Hereditary control problems: Numerical methods based on averaging
approximations,
\newblock \emph{SIAM J. Control Optim.},  \textbf{16} (1978), 169-–208.

\bibitem{breda23}
 \newblock D.~Breda, editor, 
\newblock \emph{Controlling delayed dynamics---advances in theory, methods and
              applications},
\newblock CISM International Centre for Mechanical Sciences. Courses and
              Lectures, \textbf{604},  
            \newblock Springer-Cambridge, 2023.

\bibitem{casas_mateos_troeltzsch2019}
\newblock E.~Casas, M.~Mateos and F.~Tr\"{o}ltzsch,
\newblock Optimal time delays in a class of reaction-diffusion equations,
\newblock \emph{Optimization}, \textbf{68} (2019), 255--278.

\bibitem{casas_yong2023}
\newblock E.~Casas and J.~Yong,
\newblock Optimal control of a parabolic equation with memory,
\newblock \emph{ESAIM Control Optim. Calc. Var.}, \textbf{29} (2023), Paper No.
  23, 16.

\bibitem{casas_mateos_troeltzsch2018}
\newblock E.~Casas, M.~Mateos and F.~Tr\"{o}ltzsch,
\newblock Measure control of a semilinear parabolic equation with a nonlocal
  time delay,
\newblock \emph{SIAM J. Control Optim.}, \textbf{56} (2018), 4434--4460.

\bibitem{Erneux2009}
\newblock T.~Erneux,
\newblock Applied delay differential equations,
\newblock (Surveys and Tutorials in the
  Applied Mathematical Sciences; Vol.~3). Springer, New York; 2009.


\bibitem{halanay68}
\newblock  A.~Halanay,
\newblock Optimal controls for systems with time lag,
\newblock  \emph{SIAM J. Control Optim.}, \textbf{6} (1968), 215-234.

\bibitem{hale1971}
\newblock J.~Hale,
\newblock \emph{Theory of Functional Differential Equations},
\newblock Springer-Verlag, Berlin, 1971.

\bibitem{hale_ladeira1991}
\newblock J.~K. Hale and L.~A.~C. Ladeira,
\newblock Differentiability with respect to delays,
\newblock \emph{J. Differential Equations}, \textbf{92} (1991), 14--26.

\bibitem{hale_ladeira1993}
\newblock J.~K. Hale and L.~A.~C. Ladeira,
\newblock Differentiability with respect to delays for a retarded
  reaction-diffusion equation,
\newblock \emph{Nonlinear Anal.}, \textbf{20} (1993), 793--801.

\bibitem{hewitt_stromberg1965}
\newblock E.~Hewitt and K.~Stromberg,
\newblock \emph{Real and abstract analysis. {A} modern treatment of the theory
  of functions of a real variable},
\newblock Springer-Verlag, New York, 1965.

\bibitem{kaku87}
\newblock F.~Kappel and K.~Kunisch,
\newblock Invariance results for infinite delay and Volterra equations in Besov-spaces,
\newblock \emph{Trans. Amer. Math. Soc.}, \textbf{304} (1987), 1-51.


\bibitem{kk82}
\newblock K.~Kunisch,
\newblock Approximation schemes for the linear-quadratic optimal control
              problem associated with delay equations,
\newblock \emph{SIAM J. Control Optim.}, \textbf{20} (1982), 506--540.

\bibitem{Michiels2013}
\newblock W.~Michiels, 
\newblock \emph{Control of Linear Systems with Delays},
\newblock Encyclopedia of Systems and Control
\newblock Springer-Verlag, London, 2013.


\bibitem{Nestler_etal2016}
\newblock P.~Nestler, E.~Sch\"{o}ll and F.~Tr\"{o}ltzsch,
\newblock Optimization of nonlocal time-delayed feedback controllers,
\newblock \emph{Comput. Optim. Appl.}, \textbf{64} (2016), 265--294.

\bibitem{pyragas1992}
\newblock K.~Pyragas,
\newblock Continuous control of chaos by self-controlling feedback,
\newblock \emph{Phys. Rev. Lett.}, \textbf{A 170} (1992), 421.

\bibitem{schoell_schuster2008}
\newblock E.~Sch\"{o}ll and H.~G. Schuster (eds.),
\newblock \emph{Handbook of chaos control},
\newblock 2nd edition,
\newblock Wiley-VCH Verlag GmbH \& Co. KGaA, Weinheim, 2008.

\bibitem{Zeidler1986}
\newblock E.~Zeidler,
\newblock \emph{Nonlinear functional analysis and its applications. {I}},
\newblock Springer-Verlag, New York, 1986,
\newblock Fixed-point theorems, Translated from the German by Peter R. Wadsack.

\end{thebibliography}
\end{document}